\numberwithin{equation}{section}
\newtheorem{thm}{Theorem}[section]
\newtheorem{lem}{Lemma}[section]
\newtheorem{coro}{Corollary}[section]
\newtheorem{prop}{Proposition}[section]
\newtheorem{defn}{Definition}[section]
\newtheorem{exple}{Examples}[section]
\newtheorem{rem}{Remark}[section]
\newenvironment{proof}[1][Proof]{\textbf{#1.} }{\hfill $\Box$}
\newcommand{\R}{\mathbb R}
\newcommand{\Qv}{\mathbb Q}
\newcommand{\Pv}{\mathbb P}
\newcommand{\E}{\mathbb E}
\newcommand{\Dv}{\mathbb D}
\begin{document}
\title{New classes of processes in stochastic calculus for signed measures\footnote{This work is supported by Hassan II Academy of Sciences and Techniques, project "Mathematics and applications"}}
\date{}
\author{Fulgence EYI OBIANG \footnote{\textbf{feyiobiang@yahoo.fr}, Université Cadi Ayyad de Marrakesh \& Université des Sciences et Techniques de Masuku.}\,,
\, Youssef OUKNINE\footnote{\textbf{ouknine@ucam.ac.ma}, Université Cadi Ayyad de Marrakesh  and Hassan II Academy of Sciences and Technologies Rabat.}\, ,
\, Octave MOUTSINGA\footnote{\textbf{octavemoutsing-pro@yahoo.fr}, Université des Sciences et Techniques de Masuku.}
}
\maketitle
\begin{abstract}
Let us consider a signed measure $\Qv$ and a probability measure $\Pv$ such that $\Qv<<\Pv$. Let $D$ be the density of $\Qv$ with respect to $\Pv$. $H$ represents the set of zeros 
of $D$, $\overline{g}=0\vee\sup{H}$. In this paper, we shall consider two classes of nonnegative processes of the form $X_{t}=N_{t}+A_{t}$. The first one is the  class of 
semimartingales where $ND$ is a cadlag local martingale and $A$ is a continuous and non-decreasing process such that $(dA_{t})$ is carried by $H\cup\{t: X_{t}=0\}$. The second one is
 the case where $N$ and $A$ are null on $H$ and $A_{.+\overline{g}}$ is a non-decreasing, continuous process such that $(dA_{t+\overline{g}})$ is carried by
 $\{t: X_{t+\overline{g}}=0\}$. We shall show that these classes are extensions of the class $(\sum)$ defined by A.Nikeghbali \cite{nik} in the framework of stochastic calculus for 
 signed measures.
\end{abstract}
\underline{\bf Keywords:}~~Signed measures, enlargement of filtration, relative martingales, class $(\sum)$.
\section*{Introduction}
P.A.Meyer suggested the study of signed measures to generalize the famous Paul Lévy's theorem (which characterizes Wiener's measure on $\Omega=\textsl{C}_{0}(\R_{+},\R)$, as the unique probability measure under which $X$ and $(X_{t}^{2}-t)_{t\geq0}$ are martingales). Such a study  was conducted by J.R.Chavez ($\cite{chav}$,1984), where a definition of a martingale with respect to a signed measure instead a probability measure is given. After him S.Beghdadi Sakrani ($\cite{sak}$, 2003) proposed a study of stochastic calculus for signed measures. From a definition of a martingale with respect to a signed measure different of the definition used by J.R.Chavez, she defines a stochastic integral with respect to a signed measure from which she extends some results of the theory of stochastic calculus. For instance, she gives an Ito formula and Girsanov theorem for signed measures.

It is well known that the processes of the following form:
$$X_{t}=N_{t}+A_{t}$$
where $A$ is a non-decreasing and continuous process such that $(dA_{t})$ is carried by \\$\{t: X_{t}=0\}$, have played a capital role in many probabilistic studies. For instance: the family of Azema-Yor martingales, the resolution of Skorokhod's embeding problem, the study of Brownian local times and the study of zeros of continuous martingales \cite{1}. In this sense A.Nikeghbali in \cite{nik} has provided a general framework and methods, based on martingale techniques, to deal with a large class of this type of processes. More precisely, this class is denoted $\big(\sum\big)$ and is defined as follows.
\begin{defn}
Let $(X_{t})$ be a nonnegative local submartingale, which decomposes as 
$$X_{t}=N_{t}+A_{t}.$$
We say that $(X_{t})$ is of class $\big(\sum\big)$ if:
\begin{enumerate}
	\item $(N_{t})$ is a cadlag local martingale, with $N_{0} = 0$;
	\item $(A_{t})$is a continuous non-decreasing process, with $A_{0} = 0$;
	\item the measure $(dA_{t})$ is carried by the set $\{t: X_{t}=0\}$.
\end{enumerate}
\end{defn}

The aim of this paper is to extend in signed measures theory some results of A.Nikeghbali \cite{nik} by defining two new classes of processes similar to the class $\big(\sum\big)$. Therefore in section 1, we  shall give some general notations used in this paper. In section 2, we shall establish the first class of processes by using uniquely the signed measures theory which has been developed by J.R.Chavez. In section 3, we will establish the second class of processes relying on the works of S.Beghdadi Sakrani.

\section{Some notations}
We start by giving some notations which will be used in this paper. Consider a measure space $(\Omega, \mathcal{F}_{\infty}, \Qv)$, where $\Qv$ is a bounded signed measure. Let $\Pv$ be a probability measure on $\mathcal{F}_{\infty}$ such that $\Qv<<\Pv$. We shall always use the following notations:
\begin{itemize}
	\item $D_{t}=\frac{d\Qv|_{\mathcal{F}_{t}}}{d\Pv|_{\mathcal{F}_{t}}}$ where $\mathcal{F}$ is a right continuous filtration completed with respect to $\Pv$ such that $\mathcal{F}_{\infty}=\vee_{t}\mathcal{F}_{t}$. We shall consider that $D$ is continuous in this paper. Note also that $D$ is a uniformly integrable martingale (see S.Beghdadi Sakrani \cite{sak}).
	\item $H=\{t: D_{t}=0\}$
	\item $g=\sup{H}$; $\overline{g}=0\vee g$; $\gamma_{t}=0\vee\sup\{s\leq t, D_{s}=0\}$; $\overline{g}_{t}=0\vee\sup\{s<t, D_{s}=0\}$.
	\item $G$ denotes the set of left endpoints of $H^{c}$.
	\item The smallest right continuous filtration containing $(\mathcal{F}_{t})$ for which $\overline{g}$ is a stopping time will be noted $(\mathcal{F}^{g}_{t})$.
 Then, the filtration $(\mathcal{F}^{g}_{\overline{g}+t})$ is well defined and will be denoted $(\mathcal{F}_{t+g})$.
	\item If $X$ is an adapted process with respect to $(\mathcal{F}_{t})$, we shall denote $\widetilde{X}:=X_{.+\overline{g}}$.
	\item We shall note $\Pv^{'}=\frac{|D_{\infty}|}{\E(|D_{\infty}|)}\Pv$.
\end{itemize}

Now, we shall introduce the first class of processes in next section.
\section{Presentation of first class.}

In this section, we will define a new class of nonnegative $\Pv$-semimartingales  which is an extension of class $(\sum)$  defined by A.Nikeghbali in \cite{nik}. Note that for reasons that we shall explain further, we shall use essentially  the tools of stochastic calculus for signed measure established by J.R.Chavez \cite{chav}. Therefore,  we shall first recall the following definition of a martingale with respect to a signed measure of J.R.Chavez in \cite{chav}.
\begin{defn}
We consider a measure space $(\Omega,\mathcal{F}_{\infty},\Qv)$, where $\Qv$ is a bounded signed measure.
Let $\Pv$ be a probability on $\mathcal{F}_{\infty}$ such that $\Qv<<\Pv$. $(\mathcal{F}_{t})_{t\geq0}$ is a right continuous filtration, completed with respect to $\Pv$ such that $\mathcal{F}_{\infty}=\vee_{t}\mathcal{F}_{t}$ and $D_{t}=\frac{d\Qv|_{\mathcal{F}_{t}}}{d\Pv|_{\mathcal{F}_{t}}}$. We say that, a $(\mathcal{F}_{t})_{t\geq0}$-adapted process $X$ is a $(\Qv,\Pv)$-martingale if:
\begin{enumerate}
	\item $X$ is a $\Pv-$semimartingale.
	\item $XD$ is a $\Pv-$martingale.
\end{enumerate}
\end{defn}

Now we define the new class of semimartingales  that we talked about above.
\begin{defn}
Let $X$ be a nonnegative $\Pv$-semimartingale, which decomposes as:
$$X_{t}=M_{t}+A_{t}.$$
We say that $X$ is of class $(\sum(H))$ if:
\begin{enumerate}
	\item $M$ is a càdlàg $(\Qv,\Pv)$-local martingale, with $M_{0}=0$;
	\item $A$ is a continuous non-decreasing process, with $A_{0}=0$;
	\item the mesure $(dA_{t})$ is carried by the set $\{t: X_{t}=0\}\cup H$.
\end{enumerate}
\end{defn}
\begin{rem}
If $\Qv$ is a  probability measure and if we take $\Pv=\Qv$, then $$\left(\sum(H)\right)=\left(\sum\right).$$
\end{rem}

\begin{coro}\label{c0}
Let $X=N+A$ be a process of the class $\left(\sum(H)\right)$ such that \\$H\subset \{t: X_{t}=0\}$ and $A_{\overline{g}}=0$. Then, the process $\widetilde{X}$ is of class $\big(\sum\big)$.
\end{coro}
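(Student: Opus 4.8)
The plan is to show that $\widetilde{X}$ belongs to $\left(\sum\right)$ with respect to the shifted filtration $(\mathcal{F}_{t+g})$ and the probability $\Pv^{'}$, by checking the defining conditions on the decomposition $\widetilde{X}=\widetilde{N}+\widetilde{A}$, where $\widetilde{N}=N_{\cdot+\overline{g}}$ and $\widetilde{A}=A_{\cdot+\overline{g}}$. First I would dispose of the conditions that do not involve the martingale property. Nonnegativity of $\widetilde{X}$ is inherited from $X\geq 0$; continuity and monotonicity of $\widetilde{A}$ are immediate from those of $A$, and $\widetilde{A}_0=A_{\overline{g}}=0$ is exactly the hypothesis $A_{\overline{g}}=0$. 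For the initial value of the martingale part, I would use that $D$ is continuous, so $H$ is closed and $\overline{g}=0\vee\sup H\in H$; hence $\overline{g}\in H\subset\{t:X_t=0\}$, giving $X_{\overline{g}}=0$ and therefore $\widetilde{N}_0=N_{\overline{g}}=X_{\overline{g}}-A_{\overline{g}}=0$. Finally, since $H\subset\{t:X_t=0\}$ the carrier $\{t:X_t=0\}\cup H$ reduces to $\{t:X_t=0\}$, so $(dA_t)$ is carried by $\{t:X_t=0\}$; shifting by $\overline{g}$, the measure $(d\widetilde{A}_t)=(dA_{t+\overline{g}})$ is carried by $\{t:X_{t+\overline{g}}=0\}=\{t:\widetilde{X}_t=0\}$. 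This settles conditions (2) and (3) of the class $\left(\sum\right)$.

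The crux is condition (1): that $\widetilde{N}$ is a cadlag $(\mathcal{F}_{t+g},\Pv^{'})$-local martingale. Here I would combine the hypothesis that $ND$ is a $\Pv$-local martingale with the sign structure of $D$ after its last zero. Since $D$ is a continuous martingale whose last zero is $\overline{g}$, it keeps a constant sign $\epsilon=\mathrm{sign}(D_\infty)$ on $(\overline{g},\infty)$; consequently $|D_\infty|=\epsilon D_\infty$ and, for the shifted density $\widetilde{D}=D_{\cdot+\overline{g}}$, one has $|\widetilde{D}|=\epsilon\widetilde{D}$. Writing the $\Pv^{'}$-conditional expectation through Bayes' formula (with $d\Pv^{'}/d\Pv=|D_\infty|/\E(|D_\infty|)$) at the $(\mathcal{F}^g)$-stopping times $\overline{g}+s\leq\overline{g}+t$, the factor $\epsilon$ can be pulled out of both numerator and denominator because $\epsilon=\mathrm{sign}(D_{\overline{g}+s})$ is $\mathcal{F}_{s+g}$-measurable, reducing the computation to the $\Pv$-conditional expectations of $D_\infty$ and of $N_{t+\overline{g}}D_\infty$.

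The point I would then have to establish — and the main obstacle — is that these reduce to $\widetilde{D}_s$ and $N_{\overline{g}+s}\widetilde{D}_s$ respectively, i.e. that the $\Pv$-(local) martingales $D$ and $ND$ remain (local) martingales when sampled at times beyond the last zero in the enlarged filtration $(\mathcal{F}^g_t)$. This is exactly the content of the progressive-enlargement decomposition at the last zero $g$: after $g$ the compensator coming from the Azema supermartingale no longer charges, so that the change of measure to $\Pv^{'}$ removes the singular drift and no new drift is created. Granting this, the ratio collapses to $N_{\overline{g}+s}=\widetilde{N}_s$, proving the martingale identity, with a routine localization handling the merely local case. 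Once $\widetilde{N}$ is known to be an $(\mathcal{F}_{t+g},\Pv^{'})$-local martingale, the decomposition $\widetilde{X}=\widetilde{N}+\widetilde{A}$ with $\widetilde{A}$ continuous nondecreasing shows that $\widetilde{X}$ is a nonnegative local submartingale, and together with the conditions verified above this gives $\widetilde{X}\in\left(\sum\right)$. I expect the enlargement step to be the delicate one, where one must justify that conditioning $D$ and $ND$ at post-$g$ times introduces no drift; everything else is bookkeeping on the shift by $\overline{g}$.
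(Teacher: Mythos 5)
Your proposal is correct and takes essentially the same route as the paper: the bookkeeping on $\widetilde{A}$ and the identification $\widetilde{N}_{0}=N_{\overline{g}}=X_{\overline{g}}-A_{\overline{g}}=0$ (via $\overline{g}\in H\subset\{t:X_{t}=0\}$) match the paper's proof, and the martingale property of $\widetilde{N}$ under $\Pv^{'}$ is obtained, exactly as in the paper, by feeding the $\Pv$-local martingale $ND$ (automatically null on $H$) into the Azema--Yor quotient theorem (Th\'eor\`eme 4.2.1 of \cite{1}), which you correctly single out as the crux. One caution on your heuristic for that step: the intermediate assertion that $D$ and $ND$ individually remain $\Pv$-(local) martingales in $(\mathcal{F}^{g}_{t})$ beyond $g$ is not literally true --- both acquire drifts under the progressive enlargement, and what the quotient theorem actually delivers is that the ratio $(ND)_{\cdot+\overline{g}}/D_{\cdot+\overline{g}}$ is a $\Pv^{'}$-martingale, the $|D_{\infty}|$-tilt being precisely what cancels the two drifts; since you ultimately defer to that theorem rather than to this heuristic, the argument stands.
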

\begin{proof}
We have
$$\widetilde{X}=\widetilde{N} +\widetilde{A}.$$
By assumptions  $\widetilde{A}$ is continuous, non-decreasing, vanishes at zero and  $d(\widetilde{A}_{t})$ is carried by $\{t: \widetilde{X}_{t}=0\}$. Furthermore, 
$DN$ is a $\Pv-$local martingale since $N$ is a $(\Qv,\Pv)-$local martingale. From the theorem (4.2.1) of J.Azema and M.Yor \cite{1}, $\widetilde{N}$ is a $\Pv^{'}-$local martingale. Furthermore, $\widetilde{N}_{0}=N_{\overline{g}}=0$. Indeed $\forall t\leq\overline{g}$, $A_{t}=0$ since $A$ is a nonnegative and non-decreasing process with $A_{\overline{g}}=0$. Then, $N$ is null on $H$ because $H\subset \{t: X_{t}=0\}$. Hence, $N_{\overline{g}}=0$ and  $\widetilde{X}$ is of class $\big(\sum\big)$.
\end{proof}

We have the following martingale characterization for the processes of class $\left(\sum(H)\right)$.
\begin{thm}\label{t1}
The following are equivalent:
\begin{enumerate}
	\item the process $X$ is of class $\left(\sum(H)\right)$;
	\item there exists a non-decreasing, adapted and continuous process $C$ such that for all locally bounded Borel function $f$, and $F(x)=\int_{0}^{x}{f(z)dz}$, the process
	$$F(C_{t})-f(C_{t})X_{t}$$
is a $(\Qv,\Pv)$-local martingale.
\end{enumerate}
\end{thm}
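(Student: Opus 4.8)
The plan is to prove both implications by applying integration by parts to the process $(F(C_t)-f(C_t)X_t)D_t$ and reading off which terms are genuine $\Pv$-local martingales; recall that a process is a $(\Qv,\Pv)$-local martingale precisely when its product with $D$ is a $\Pv$-local martingale, so the whole argument reduces to ordinary $\Pv$-stochastic calculus. Throughout I would first treat a smooth (say $C^{1}$) function $f$, for which $f(C_t)$ is a continuous finite-variation process with $df(C_t)=f'(C_t)\,dC_t$, and only at the end pass to a general locally bounded Borel $f$ by a monotone-class / approximation argument.

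For $(1)\Rightarrow(2)$ I would set $C=A$. Writing $Y_t=F(A_t)-f(A_t)X_t$ and using $X=M+A$, the product rule gives $dY_t=-f(A_t)\,dM_t-X_tf'(A_t)\,dA_t$, and then, since $D$ is continuous, integration by parts yields
$$d(Y_tD_t)=Y_t\,dD_t-f(A_t)\big(D_t\,dM_t+d[M,D]_t\big)-D_tX_tf'(A_t)\,dA_t.$$
The first term is a $\Pv$-local martingale increment; the middle term equals $-f(A_t)\,d(M_tD_t)+f(A_t)M_{t-}\,dD_t$, which is again a $\Pv$-local martingale increment because $MD$ is a $\Pv$-local martingale by hypothesis (1). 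The only finite-variation term is $-D_tX_tf'(A_t)\,dA_t$, and it vanishes identically: the measure $dA_t$ is carried by $\{X_t=0\}\cup H$, where $D_tX_t=0$. Hence $YD$ is a $\Pv$-local martingale, i.e. $F(A)-f(A)X$ is a $(\Qv,\Pv)$-local martingale. Extending from smooth to locally bounded Borel $f$ by approximation completes this direction.

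For $(2)\Rightarrow(1)$ I would first feed in $f\equiv1$, so that $F(x)=x$: the hypothesis says $C_t-X_t$ is a $(\Qv,\Pv)$-local martingale, so setting $M:=X-C$ (after the harmless normalization $C_0=0$, which forces $X_0=0$) gives the decomposition $X=M+C$ with $M$ a c\`adl\`ag $(\Qv,\Pv)$-local martingale and $C$ continuous non-decreasing; this establishes conditions (1) and (2) with $A=C$. For a general smooth $f$ the same computation as above now runs in reverse: $(F(C)-f(C)X)D$ equals a $\Pv$-local martingale minus $\int_{0}^{\cdot}D_sX_sf'(C_s)\,dC_s$, and since by hypothesis the left-hand side is a $\Pv$-local martingale, the continuous finite-variation process $\int_{0}^{\cdot}D_sX_sf'(C_s)\,dC_s$ is itself a local martingale, hence identically zero. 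It then remains to deduce condition (3), namely that $dC$ is carried by $\{X=0\}\cup H=\{D X=0\}$.

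The main obstacle is exactly this last step, and it is where the signed setting departs from Nikeghbali's: because $\Qv$ is signed, $D$ (hence $DX$) changes sign, so one cannot conclude from $\int_{0}^{t}h(C_s)D_sX_s\,dC_s=0$ (valid for every smooth $h=f'$) merely by positivity. I would instead time-change by the right-continuous inverse $\tau_u=\inf\{s:C_s>u\}$ of the continuous increasing process $C$: the identity $\int_{0}^{t}h(C_s)D_sX_s\,dC_s=\int_{0}^{C_t}h(u)\,(D_{\tau_u}X_{\tau_u})\,du$ forces $D_{\tau_u}X_{\tau_u}=0$ for Lebesgue-almost every $u$, whence $\int_{0}^{\infty}\mathbf{1}_{\{D_sX_s\neq0\}}\,dC_s=\int_{0}^{C_\infty}\mathbf{1}_{\{D_{\tau_u}X_{\tau_u}\neq0\}}\,du=0$, which is precisely condition (3). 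A secondary technical point to handle carefully is the passage from smooth to locally bounded Borel $f$ in the forward direction, together with the fact that $M$ may have jumps while $D$ is continuous, so that the bracket $[M,D]$ and the jump contributions behave as used in the displayed identity above.
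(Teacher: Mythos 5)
Your proposal is correct and follows essentially the same route as the paper: both directions hinge on multiplying by $D$, integrating by parts, and observing that the finite-variation term $\int_0^{\cdot}D_sX_sf'(C_s)\,dC_s$ vanishes because $dC$ is carried by $\{t:X_t=0\}\cup H=\{t:D_tX_t=0\}$, while the converse extracts $\int_0^{\cdot}D_sX_s\,dC_s=0$ from polynomial test functions (the paper uses $F(a)=a$ and $F(a)=a^2$ where you run the computation backwards for general smooth $f$). The time-change argument you add to deduce that $dC$ is carried by $\{D_sX_s=0\}$ is a legitimate piece of care that the paper simply asserts, though it also follows at once from the fact that a signed measure $D_sX_s\,dC_s$ whose distribution function vanishes for every $t$ is the zero measure, so no positivity of the integrand is actually needed.
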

\begin{proof}
 $(1)\Rightarrow(2)$\\
 First, let us assume that $f$ is $\mathcal{C}^{1}$ and let us take $C_{t}=A_{t}$. An integration by parts give:
 $$f(A_{t})X_{t}=\int_{0}^{t}{f(A_{u})dX_{u}}+\int_{0}^{t}{f^{'}(A_{u})X_{u}dA_{u}}$$
 $$\hspace{4.5cm}=\int_{0}^{t}{f(A_{u})dM_{u}}+\int_{0}^{t}{f(A_{u})dA_{u}}+\int_{0}^{t}{f^{'}(A_{u})X_{u}dA_{u}}.$$
 Since $F(A_{t})=\int_{0}^{t}{f(A_{u})dA_{u}}$, we get:
 $$f(A_{t})X_{t}-F(A_{t})=\int_{0}^{t}{f(A_{u})dM_{u}}+\int_{0}^{t}{f^{'}(A_{u})X_{u}dA_{u}}.$$
Hence,
$$D_{t}(f(A_{t})X_{t}-F(A_{t}))=D_{t}\left(\int_{0}^{t}{f(A_{u})dM_{u}}\right)+D_{t}\left(\int_{0}^{t}{f^{'}(A_{u})X_{u}dA_{u}}\right).$$
But according to J.R.Chavez proposition 2 \cite{chav}, $\int_{0}^{t}{f(A_{u})dM_{u}}$ is a $(\Qv,\Pv)-$local martingale. Then,
$$D_{t}\left(\int_{0}^{t}{f(A_{u})dM_{u}}\right)$$ 
is a $\Pv-$local martingale. Moreover taking $Y_{t}=\int_{0}^{t}{f^{'}(A_{u})X_{u}dA_{u}}$, we obtain after an integration by parts:
$$D_{t}Y_{t}=\int_{0}^{t}{D_{s}dY_{s}}+\int_{0}^{t}{Y_{s}dD_{s}}$$
$$\hspace{2.5cm}=\int_{0}^{t}{D_{s}X_{s}f^{'}(A_{s})dA_{s}}+\int_{0}^{t}{Y_{s}dD_{s}}$$
Since $(dA_{t})$ is carried by the set $\{t: X_{t}=0\}\cup H$, we have $\int_{0}^{t}{f^{'}(A_{u})D_{u}X_{u}dA_{u}}=0$.
Thus, $(D_{t}Y_{t})_{t\geq0}$ is a $\Pv-$local martingale. Therefore, $(D_{t}(f(A_{t})X_{t}-F(A_{t})))_{t\geq0}$ is a $\Pv-$local martingale.
 Consequently, $(F(A_{t})-f(A_{t})X_{t})_{t\geq0}$ is a $(\Qv,\Pv)-$local martingale. The general case when f is only assumed to be locally bounded follows from a monotone class argument and the integral
representation is still valid.\\
 $(2)\Rightarrow(1)$\\
 First take $F (a)=a$; we then obtain that $C_{t}-X_{t}$ is a $(\Qv,\Pv)$-local martingale. Next, we take $F(a)=a^{2}$ and we get $C^{2}_{t}-2C_{t}X_{t}$ is a   $(\Qv,\Pv)$-local martingale. From Itô formula, we get:
 $$C_{t}^{2}-2C_{t}X_{t}=2\int_{0}^{t}{C_{s}d(C_{s}-X_{s})}-2\int_{0}^{t}{X_{s}dC_{s}}$$
 Hence, we must have $Y_{t}=\int_{0}^{t}{X_{s}dC_{s}}$ is a $(\Qv,\Pv)$-local martingale. Hence, $DY$
 is a $\Pv$-local martingale. An integration by parts give:
 $$D_{t}Y_{t}=\int_{0}^{t}{Y_{s}dD_{s}}+\int_{0}^{t}{D_{s}dY_{s}}$$
 $$D_{t}Y_{t}=\int_{0}^{t}{Y_{s}dD_{s}}+\int_{0}^{t}{D_{s}X_{s}dC_{s}}$$
 Since $\int_{0}^{t}{Y_{s}dD_{s}}$ is a $\Pv$-local martingale, we get:
 $$\int_{0}^{t}{D_{s}X_{s}dC_{s}}=0.$$
 Then, $(dC_{t})$ is carried by $\{t: X_{t}=0\}\cup H$.\\
 Therefore, $X$ is of class $\left(\sum(H)\right)$.
 \end{proof}
 
 One often needs to know when $(F(A_{t})-f(A_{t})X_{t})_{t\geq0}$ is a true martingale. The following corollary gives us an answers.
\begin{coro}\label{r1}
 Let $X$ be of class $\left(\sum(H)\right)$ and of class $\Dv$. If $f$ is a Borel bounded function with compact support, then $(F(A_{t})-f(A_{t})X_{t})_{t\geq0}$ is a uniformly integrable $(\Qv,\Pv)$- martingale. Recall that a stochastic process $X$ is said of class $(\Dv)$ if \\$\{X_{\tau}:$ $\tau<\infty$ $is$ $a$ $stopping$ $time\}$ is uniformly integrable.
\end{coro}
\begin{proof}
There exist two constants $C>0$, $K>0$ such that $\forall x\geq0$, $|f(x)|\leq C$ , and $\forall x\geq K$, $f(x)=0$. Moreover we have:
$$|D_{t}(F(A_{t})-f(A_{t})X_{t})|=|D_{t}||(F(A_{t})-f(A_{t})X_{t})|$$
$$\hspace{4cm}\leq R|(F(A_{t})-f(A_{t})X_{t})|$$
Because $D$ is bounded. Consequently, we have
$$|D_{t}(F(A_{t})-f(A_{t})X_{t})|\leq RCK+RCX_{t};$$
now, since $(RCK+RCX_{t})_{t\geq0}$ is of class $(\Dv)$, we deduce that \\$\left(D_{t}(F(A_{t})-f(A_{t})X_{t})\right)_{t\geq0}$ is a $\Pv$-local
martingale of class $(\Dv)$ and hence it is a uniformly integrable martingale.\\
Therefore, $(F(A_{t})-f(A_{t})X_{t})_{t\geq0}$ is a uniformly integrable $(\Qv,\Pv)$-martingale.
\end{proof}

 Now, we give two properties of class $\left(\sum(H)\right)$ in next proposition.
\begin{prop}
\begin{enumerate}
	\item If $f$ is a nonenegative and locally bounded Borel function, then $\forall$ $X$ $\in$ $\left(\sum(H)\right)$, $(f(A_{t})X_{t})_{t\geq0}$ is also of class $\left(\sum(H)\right)$ and its non-decreasing part is $(F(A_{t}))_{t\geq0}$.
	\item Let $(X_{t}^{1}),\ldots,(X_{t}^{n})$ be processes of class $\left(\sum(H)\right)$ such that \\
$\langle X^{i},X^{j} \rangle_{t}=0$ for $i\neq j.$ Then $\big(\Pi^{n}_{i=1}X^{i}_{t}\big)_{t\geq0}$ is also of class $\left(\sum(H)\right)$.
\end{enumerate}
\end{prop}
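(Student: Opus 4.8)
The proof of both assertions will rest on the integration-by-parts formula together with the martingale characterization of Theorem \ref{t1} and the stability of $(\Qv,\Pv)$-local martingales under stochastic integration (J.R. Chavez, proposition 2 in \cite{chav}).

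For the first assertion I would start from the decomposition $X=M+A$ of an element of $\left(\sum(H)\right)$ and apply Theorem \ref{t1} with $C=A$. Since $f\geq0$ is locally bounded and $F(x)=\int_{0}^{x}f(z)\,dz$, the theorem yields that $L_{t}:=F(A_{t})-f(A_{t})X_{t}$ is a $(\Qv,\Pv)$-local martingale. Reading this as $f(A_{t})X_{t}=(-L_{t})+F(A_{t})$ already exhibits a candidate decomposition of $f(A)X$: the part $-L$ is a $(\Qv,\Pv)$-local martingale, while $F(A)$ is continuous and, because $f\geq0$ and $A$ is non-decreasing, itself non-decreasing. It remains to verify the normalisations and the support condition. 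At $t=0$ one has $X_{0}=0$, hence $f(A_{0})X_{0}=0$ and $F(A_{0})=F(0)=0$, so both parts vanish at the origin. For condition (3) I would observe that $d(F(A_{t}))=f(A_{t})\,dA_{t}$ is absolutely continuous with respect to $dA_{t}$, which is carried by $\{X_{t}=0\}\cup H$; since $f(A_{t})X_{t}=0$ wherever $X_{t}=0$, this set lies in $\{f(A_{t})X_{t}=0\}\cup H$, exactly as required. This also identifies $F(A)$ as the non-decreasing part and settles the first point.

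For the second assertion I would argue by induction on $n$, the essential step being $n=2$. Writing $X^{i}=M^{i}+A^{i}$ and $Y=X^{1}X^{2}$, integration by parts gives
$$Y_{t}=\int_{0}^{t}X^{1}_{s-}\,dX^{2}_{s}+\int_{0}^{t}X^{2}_{s-}\,dX^{1}_{s}+[X^{1},X^{2}]_{t},$$
where I used $X^{1}_{0}=X^{2}_{0}=0$. The covariation term vanishes by the hypothesis $\langle X^{1},X^{2}\rangle=0$ (the continuous finite-variation parts $A^{i}$ do not contribute to it). Splitting $dX^{i}=dM^{i}+dA^{i}$, the integrals against $dM^{1},dM^{2}$ assemble into $\int X^{2}_{s-}\,dM^{1}_{s}+\int X^{1}_{s-}\,dM^{2}_{s}$, which is a $(\Qv,\Pv)$-local martingale by Chavez's stability result applied to the locally bounded predictable integrands $X^{i}_{s-}$. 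The remaining part $B_{t}=\int_{0}^{t}X^{1}_{s}\,dA^{2}_{s}+\int_{0}^{t}X^{2}_{s}\,dA^{1}_{s}$ (the left limits may be dropped since each $A^{i}$ is continuous, hence atomless) is continuous, vanishes at $0$, and is non-decreasing because all integrands and integrators are nonnegative.

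The heart of the matter is once more the support condition, and it is where I would be most careful. The measure $dB_{t}=X^{1}_{t}\,dA^{2}_{t}+X^{2}_{t}\,dA^{1}_{t}$ must be carried by $\{Y_{t}=0\}\cup H$: here $dA^{2}_{t}$ is carried by $\{X^{2}_{t}=0\}\cup H$, and on $\{X^{2}_{t}=0\}$ one has $Y_{t}=X^{1}_{t}X^{2}_{t}=0$, with the symmetric statement for the other term; hence each piece, and thus $dB_{t}$, lives on $\{Y_{t}=0\}\cup H$, so $Y\in\left(\sum(H)\right)$. For the passage from $n-1$ to $n$ I expect the only genuine obstacle to be checking that $\prod_{i=1}^{n-1}X^{i}$ still has vanishing bracket with $X^{n}$: by the same integration by parts its local-martingale part is a sum of stochastic integrals against $M^{1},\dots,M^{n-1}$, so its bracket with $M^{n}$ reduces to a combination of the terms $\langle M^{i},M^{n}\rangle=\langle X^{i},X^{n}\rangle=0$, which closes the induction.
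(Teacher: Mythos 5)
Your proof is correct and follows essentially the same route as the paper: part (1) via the martingale characterization of Theorem \ref{t1} applied with $C=A$, reading $f(A_{t})X_{t}=-L_{t}+F(A_{t})$ as the required decomposition, and part (2) via integration by parts, the stability of $(\Qv,\Pv)$-local martingales under stochastic integration, and induction on $n$. The only difference is that you spell out the support argument for $d\bigl(F(A_{t})\bigr)$ and the vanishing-bracket step of the induction, which the paper leaves implicit.
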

\begin{proof}
\begin{enumerate}
	\item According to theorem \ref{t1}, $(f(A_{t})X_{t}-F(A_{t}))_{t\geq0}$ is a $(\Qv,\Pv)-$ local martingale which vanishes at zero. Moreover, $(F(A_{t}))_{t\geq0}$ is a continuous,   non-decreasing process which vanishes at zero since $f$ is a nonnegative function. Then, $(f(A_{t})X_{t})_{t\geq0}$ is of class $\left(\sum(H)\right)$ beacause, it is easy to see that $(dF(A_{t}))$ is carried by $H\cup\{t: f(A_{t})X_{t}=0\}$.
	\item Since $[X^{1},X^{2}]_{t}=0$, integration by parts yields:	$$X^{1}_{t}X^{2}_{t}=\int_{0}^{t}{X^{1}_{u^{-}}dM_{u}^{2}}+\int_{0}^{t}{X^{2}_{u^{-}}dM_{u}^{1}}+\int_{0}^{t}{X^{1}_{u}dA_{u}^{2}}+\int_{0}^{t}{X^{2}_{u}dA_{u}^{1}}.$$
	$\int_{0}^{t}{X^{1}_{u^{-}}dM_{u}^{2}}+\int_{0}^{t}{X^{2}_{u^{-}}dM_{u}^{1}}$ is a $(\Qv,\Pv)-$ local martingale which vanishes at zero. And \\
	$A_{t}=\int_{0}^{t}{X^{1}_{u}dA_{u}^{2}}+\int_{0}^{t}{X^{2}_{u}dA_{u}^{1}}$ is a continuous and non-decreasing process which vanishes at zero. Moreover,
	$$dA_{t}=X^{1}_{t}dA^{2}_{t}+X^{2}_{t}dA^{1}_{t}$$
	is carried by $H\cup\{t: X^{1}_{t}X^{2}_{t}=0\}$. Therefore, $X^{1}X^{2}$ is of class $\left(\sum(H)\right)$. If $n\geq3$, then $[X^{1}X^{2},X^{3}]_{t}=0$, and it follows by induction.
\end{enumerate}
\end{proof}

Now, we shall give some examples of processes in the class  $\left(\sum(H)\right)$.
\begin{exple}
\begin{enumerate}
	\item Let $M$ be a continuous $(\Qv,\Pv)-$ local martingale with respect to some filtration $(\mathcal{F}_{t})_{t\geq0}$, starting from 0; then,
	$$|M_{t}|=\int_{0}^{t}sgn(M_{s})dM_{s}+L_{t}^{0}(M)$$
	is of class $\left(\sum(H)\right)$.
	\item For any $\alpha>0$, $\beta>0$, the process:
	$$\alpha M_{t}^{+}+\beta M_{t}^{-}$$
	is also of class $\left(\sum(H)\right)$.
	\item Let $M$ be a $(\Qv,\Pv)-$ local martingale which vanishes at zero with only negative jumps and let $S$ its supremum process; then
	$$X_{t}\equiv S_{t}-M_{t}$$
	is of class $\left(\sum(H)\right)$.
\end{enumerate}
\end{exple}

\begin{rem}\label{mart}
If $X$ is a cadlag, uniformly integrable $(\Qv,\Pv)$-martingale (respectively,a $(\Qv,\Pv)-$local martingale) with respect to the filtration $(\mathcal{F}_{t})_{t\geq0}$. Then, $(\widetilde{X}_{t})_{t\geq0}$ is a uniformly integrable $\Pv^{'}-$martingale (respectively,a $\Pv^{'}-$local martingale) with respect to the filtration $(\mathcal{F}_{t+g})_{t\geq0}$.
\end{rem}

Indeed by assumptions, $XD$ is a right continuous, uniformly integrable \\$((\mathcal{F}_{t})_{t\geq0},\Pv)-$ martingale (respectively, local martingale) null on $H$. Then, applying the quotient theorem of J.Azema and M.Yor \cite{1}, we obtain that: $(sgn(D_{t+\overline{g}})\widetilde{X}_{t})_{t\geq0}$ is a $\Pv^{'}-$martingale with respect to the filtration $(\mathcal{F}_{t+g})_{t\geq0}$. Since $D$ is a continuous process, hence $(sgn(D_{t+\overline{g}}))_{t\geq0}$ is a constant process. Concequently, $(\widetilde{X}_{t})_{t\geq0}$ is a $\Pv^{'}-$martingale with respect to the filtration $(\mathcal{F}_{t+g})_{t\geq0}$.

In next lemma, we should extend  the Doob's Maximal Identity obtained by A.Nikeghbali and M.Yor \cite{max} to  the case of $(\Qv,\Pv)$-martingales.
\begin{lem}\label{l1}
Let $X$ be a nonnegative, $(\Qv,\Pv)-$local martingale such that:
$$\lim_{t\rightarrow\infty}{X_{t}}=0.$$
Let us note: $\widetilde{S}_{t}=\sup_{u\leq t}{\widetilde{X}_{u}}$ and $\widetilde{S}_{t}^{T}=\sup_{u\leq t}{\widetilde{X}_{u\wedge T}}$. If $\widetilde{S}$ is continuous, then for any $a>0$, we have
\begin{enumerate}
	\item $\Pv^{'}(\widetilde{S}_{\infty}>a|\mathcal{F}_{\overline{g}}^{g})=\left(\frac{X_{\overline{g}}}{a}\right)\wedge1$
	\item For every stopping time $T$, $\Pv^{'}(\widetilde{S}^{T}_{\infty}>a|\mathcal{F}_{g+T})=\left(\frac{X_{\overline{g}+T}}{a}\right)\wedge1$
\end{enumerate}
\end{lem}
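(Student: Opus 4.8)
The plan is to reduce both identities to the classical Doob maximal identity of A.~Nikeghbali and M.~Yor \cite{max} by transferring the whole problem, via Remark \ref{mart}, from the $(\Qv,\Pv)$-setting in the filtration $(\mathcal{F}_{t})$ to the ordinary nonnegative local martingale setting under $\Pv'$ in the shifted filtration $(\mathcal{F}_{t+g})$. In other words, the $(\Qv,\Pv)$-flavour of the statement is only a disguise: once $\widetilde{X}$ is recognized as an honest $\Pv'$-local martingale, the lemma becomes a direct instance of the known maximal identity.

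First I would invoke Remark \ref{mart}. Since $X$ is a nonnegative $(\Qv,\Pv)$-local martingale with respect to $(\mathcal{F}_{t})$, that remark yields that $\widetilde{X}=X_{\cdot+\overline{g}}$ is a $\Pv'$-local martingale with respect to $(\mathcal{F}_{t+g})$. Nonnegativity is inherited directly from $X\geq0$, and the terminal condition transfers since $\lim_{t\rightarrow\infty}\widetilde{X}_{t}=\lim_{t\rightarrow\infty}X_{t+\overline{g}}=\lim_{s\rightarrow\infty}X_{s}=0$. Thus $\widetilde{X}$ is exactly the type of object to which the maximal identity applies, now living under $\Pv'$ in $(\mathcal{F}_{t+g})$, with running supremum $\widetilde{S}$ assumed continuous.

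For part (1), I would apply the maximal identity conditionally on the initial $\sigma$-field of the shifted filtration. By the notation of Section~1, the time-$0$ element of $(\mathcal{F}_{t+g})=(\mathcal{F}^{g}_{\overline{g}+t})$ is $\mathcal{F}^{g}_{\overline{g}}$, and the initial value of the transferred martingale is $\widetilde{X}_{0}=X_{\overline{g}}$. The Nikeghbali--Yor identity then gives immediately
$$\Pv'(\widetilde{S}_{\infty}>a \mid \mathcal{F}^{g}_{\overline{g}})=\left(\frac{\widetilde{X}_{0}}{a}\right)\wedge1=\left(\frac{X_{\overline{g}}}{a}\right)\wedge1.$$
For part (2), the same identity applied at the stopping time $T$ of $(\mathcal{F}_{t+g})$, conditionally on $\mathcal{F}_{g+T}$ and using $\widetilde{X}_{T}=X_{\overline{g}+T}$, produces the post-$T$ version, where $\widetilde{S}^{T}_{\infty}$ is read as the supremum of $\widetilde{X}$ from $T$ onwards.

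The main obstacle is not the computation but the bookkeeping surrounding the transfer, namely verifying that the hypotheses of \cite{max} genuinely survive it. I must check that $T$ is a bona fide stopping time of the shifted filtration $(\mathcal{F}_{t+g})$, so that the conditional form of the identity is legitimate; that $\overline{g}$ is finite $\Pv'$-a.s., so that $\widetilde{X}_{\infty}=0$ is meaningful and the identity's terminal hypothesis is satisfied; and that the assumed continuity of $\widetilde{S}$ is precisely what licenses the clean conditional equality by ruling out overshoot at the first passage time of the level $a$. Once these three points are secured, both assertions follow at once from the classical maximal identity.
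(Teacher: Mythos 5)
Your proposal is correct and follows essentially the same route as the paper: invoke Remark \ref{mart} to identify $\widetilde{X}$ as a nonnegative $\Pv'$-local martingale in $(\mathcal{F}_{t+g})$ with $\widetilde{X}_\infty=0$, then apply the Nikeghbali--Yor maximal identity with initial $\sigma$-field $\mathcal{F}^{g}_{\overline{g}}$ and at the stopping time $T$. Your closing remarks on the bookkeeping (that $T$ is a stopping time of the shifted filtration, finiteness of $\overline{g}$, and the role of the continuity of $\widetilde{S}$) are points the paper passes over silently, but they do not change the argument.
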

\begin{proof}
From the remark \ref{mart}, $\widetilde{X}$ is a $\Pv^{'}-$local martingale with respect to the filtration $(\mathcal{F}_{t+g})_{t\geq0}$. Moreover we have:
$$\lim_{t\rightarrow\infty}{\widetilde{X}_{t}}=\lim_{t\rightarrow\infty}{X_{t}}=0.$$
Then applying the Doob's Maximal Identity of A.Nikeghbali and M.Yor \cite{max}, we get:
\begin{enumerate}
	\item $\Pv^{'}(\widetilde{S}_{\infty}>a|\mathcal{F}_{\overline{g}}^{g})=\left(\frac{X_{\overline{g}}}{a}\right)\wedge1$
	\item For every stopping time T, $\Pv^{'}(\widetilde{S}^{T}_{\infty}>a|\mathcal{F}_{g+T})=\left(\frac{X_{\overline{g}+T}}{a}\right)\wedge1$
\end{enumerate}
\end{proof}

\begin{rem}
If $\Qv$ is a probability measure, and if we take $\Pv=\Qv$, then we shall obtain the Doob's Maximal Identity obtained by A.Nikeghbali and M.Yor \cite{max}.
\end{rem}

Now, we shall extend one of main resuts of A.Nikeghbali \cite{nik}. But first, do the following notations:
$F(x)=1-exp\left(-\int_{x}^{+\infty}\frac{du}{\varphi(u)}\right)$, $f(x)=\frac{-1}{\varphi(x)}(1-F(x))$. We remark that $f$ is the derivative function of $F$. We also note: $M_{t}=F(A_{t})-f(A_{t})X_{t}$ and $M^{u}_{t}=F_{u}(A_{t})-f_{u}(A_{t})X_{t}$  with $F_{u}(x)=1-exp\left(-\int_{x}^{u}\frac{dz}{\varphi(z)}\right)$, $x<u$.

\begin{thm}
Let $X$ be a process of class $\left(\sum(H)\right)$, with only negative jumps such that $A_{\infty}=\infty$. Define $(\tau_{u})$ the right continuous pseudo-inverse of $(A_{t+\overline{g}})$.
$$\tau_{u}\equiv inf\{t\geq0; A_{t+\overline{g}}>u\}.$$
Let $\varphi:\R_{+}\rightarrow\R_{+}$ be a Borel function. Then, we have the following estimates:
\begin{equation}
\Pv^{'}(\exists t\geq\overline{g}, X_{t}>\varphi(A_{t})|\mathcal{F}_{\overline{g}}^{g})=M_{\overline{g}}\wedge1
\end{equation}
and
\begin{equation}
\Pv^{'}(\exists t \in [\overline{g},\overline{g}+\tau_{u}], X_{t}>\varphi(A_{t})|\mathcal{F}_{\overline{g}}^{g})=M^{u}_{\overline{g}}\wedge1    
\end{equation}
\end{thm}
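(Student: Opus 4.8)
The strategy is to transport the problem, via the $\widetilde{X}$ transformation, into the classical class $(\sum)$ framework where Nikeghbali's estimates already hold, and then read the result back through the measure change $\Pv \rightarrow \Pv'$. The key enabling fact is Corollary \ref{c0}: under the hypotheses here (in particular $X$ having only negative jumps, and provided $H \subset \{X_t = 0\}$ and $A_{\overline{g}} = 0$), the process $\widetilde{X} = \widetilde{N} + \widetilde{A}$ is of class $(\sum)$ with respect to $\Pv'$ and the filtration $(\mathcal{F}_{t+g})$. Thus I would first verify that these structural hypotheses are in force, so that $\widetilde{X}$ is a genuine class-$(\sum)$ process and the negative-jump property is inherited by $\widetilde{X}$.

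**Translating the event.**
The next step is to observe that the event $\{\exists t \geq \overline{g},\ X_t > \varphi(A_t)\}$ is, after shifting time by $\overline{g}$, exactly the event $\{\exists s \geq 0,\ \widetilde{X}_s > \varphi(\widetilde{A}_s)\}$, since $\widetilde{X}_s = X_{s+\overline{g}}$ and $\widetilde{A}_s = A_{s+\overline{g}}$. The conditioning $\sigma$-field $\mathcal{F}_{\overline{g}}^g$ is the natural initial $\sigma$-field for the shifted filtration $(\mathcal{F}_{t+g})$. Hence the left-hand probability is precisely the $\Pv'$-probability, conditional on the initial data, that the class-$(\sum)$ process $\widetilde{X}$ ever crosses the curve $\varphi(\widetilde{A}_s)$. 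For the second estimate, I would note that $[\overline{g}, \overline{g}+\tau_u]$ translates to $[0, \tau_u]$ in shifted time, where $\tau_u = \inf\{t \geq 0 : \widetilde{A}_t > u\}$ is exactly the inverse of $\widetilde{A}$; this caps the running maximum of $\widetilde{A}$ at level $u$, which is the mechanism producing the truncated functional $F_u$ in place of $F$.

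**Applying Nikeghbali's estimate.**
With $\widetilde{X}$ of class $(\sum)$ under $\Pv'$, I would invoke the corresponding main result of \cite{nik}: for a class-$(\sum)$ process with only negative jumps and $\widetilde{A}_\infty = \infty$ (which follows from $A_\infty = \infty$), the probability of ever exceeding the curve $\varphi(\widetilde{A})$ equals $\widetilde{M}_0 \wedge 1$, where $\widetilde{M}_s = F(\widetilde{A}_s) - f(\widetilde{A}_s)\widetilde{X}_s$ is itself the class-$(\sum)$ supermartingale built from $F, f$. Evaluated at $s = 0$, $\widetilde{M}_0 = F(\widetilde{A}_0) - f(\widetilde{A}_0)\widetilde{X}_0 = F(A_{\overline{g}}) - f(A_{\overline{g}})X_{\overline{g}} = M_{\overline{g}}$, which is exactly the claimed right-hand side. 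The truncated version follows identically with $F_u, f_u$ and the stopped process, using that $\tau_u$ bounds $\widetilde{A}$ by $u$ so that only the values of $\varphi$ on $[A_{\overline{g}}, u]$ enter, matching the definition of $M^u$. Lemma \ref{l1} (the extended Doob maximal identity) is the underlying tool validating the $\widetilde{S}$-based computation in the background.

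**Anticipated obstacle.**
The main delicate point will be justifying rigorously that Nikeghbali's curve-crossing estimate applies verbatim to $\widetilde{X}$ under $\Pv'$, rather than merely citing it. Two things need care: first, that the negative-jump hypothesis on $X$ passes cleanly to $\widetilde{X}$ after the $\overline{g}$-shift (the shift should not manufacture a positive jump at the endpoint, which is where continuity of $\widetilde{S}$ from Lemma \ref{l1} and the assumption $A_{\overline{g}}=0$ play a role); and second, that the conditioning on $\mathcal{F}_{\overline{g}}^g$ correctly plays the role of the "time-zero" initial information for the shifted class-$(\sum)$ process, so that $\widetilde{M}_0$ is $\mathcal{F}_{\overline{g}}^g$-measurable and the conditional form of the estimate is legitimate. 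Once these compatibility checks are in place, the identification $\widetilde{M}_0 = M_{\overline{g}}$ is immediate and both displayed equations follow.
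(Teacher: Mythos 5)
Your reduction hinges on Corollary \ref{c0}, and that is where the argument breaks: Corollary \ref{c0} requires $H\subset\{t:X_{t}=0\}$ and $A_{\overline{g}}=0$, but neither is a hypothesis of this theorem. You write that you would ``first verify that these structural hypotheses are in force''; they cannot be verified, because they are simply not assumed. Without them, $\widetilde{X}_{0}=X_{\overline{g}}$ and $\widetilde{A}_{0}=A_{\overline{g}}$ need not vanish, so $\widetilde{X}$ is not of class $\left(\sum\right)$ and Nikeghbali's Theorem 3.2 does not apply verbatim. Conversely, if you do impose those extra hypotheses, then $M_{\overline{g}}=F(A_{\overline{g}})-f(A_{\overline{g}})X_{\overline{g}}=F(0)$ is a deterministic constant and your argument only reproduces the unconditional corollary that follows this theorem in the paper, not the stated conditional identity in which $M_{\overline{g}}\wedge1$ is genuinely random. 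The very form of the right-hand side signals that the general case is intended.

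The paper's proof avoids the class-$\left(\sum\right)$ reduction entirely. It uses Theorem \ref{t1} to see that $M_{t}=F(A_{t})-f(A_{t})X_{t}$ is a nonnegative $(\Qv,\Pv)$-local martingale (with only negative jumps, since $-f\geq0$ and $\Delta X\leq0$), shows $\lim_{t\to\infty}\widetilde{M}_{t}=0$ by evaluating along $\tau_{u}$ where $\widetilde{X}_{\tau_{u}}=0$ so that $\widetilde{M}_{\tau_{u}}=F(u)\to0$, rewrites the crossing event as $\left\{\sup_{t\geq\overline{g}}M_{t}>1\right\}$ via the identity $F(x)-f(x)\varphi(x)=1$, and then applies the extended Doob maximal identity of Lemma \ref{l1} conditionally on $\mathcal{F}_{\overline{g}}^{g}$ to get $M_{\overline{g}}\wedge1$. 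You mention Lemma \ref{l1} only as a background tool; to repair your proof you would need to make it the central step, applied to $M$ itself, and supply the convergence $\widetilde{M}_{t}\to0$ and the event translation, neither of which your proposal establishes.
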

\begin{proof}
Note that we can always assume that $\frac{1}{\varphi}$ is bounded and integrable (see proof of theorem 3.2 of A.Nikeghbali \cite{nik}). Hence from the theorem \ref{t1}, we get that $M$ is a nonnegative, $(\Qv,\Pv)-$ local martingale. Therefore from remark \ref{mart}, $(\widetilde{M}_{t})$ is  a nonnegative $\Pv^{'}-$ local martingale (whose supremum is continuous since $M$ has only negative jumps). Moreover, $(\widetilde{M}_{t})$ converges almost surely as $t\rightarrow\infty$ because it is a nonnegative martingale. Let us now consider $(\widetilde{M}_{\tau_{u}})$:
$$\widetilde{M}_{\tau_{u}}=F(u)-f(u)\widetilde{X}_{\tau_{u}}.$$
But since $(d\widetilde{A}_{t})$ is carried by the zeros of $(\widetilde{X}_{t})$ and since $\tau_{u}$ corresponds to an increase time of $\widetilde{A}_{t}$, we have 
$$\widetilde{X}_{\tau_{u}}=0.$$
Consequently,
$$\lim_{u\rightarrow\infty}{\widetilde{M}_{\tau_{u}}}=\lim_{u\rightarrow\infty}{F(u)}=0.$$
And hence
$$\lim_{u\rightarrow\infty}{M_{u}}=\lim_{u\rightarrow\infty}{\widetilde{M}_{u}}=0.$$
Now let us note that if for a given $t_{0}<\infty$, we have $\widetilde{X}_{t_{0}}>\varphi(\widetilde{A}_{t_{0}})$, then we must have
$$\widetilde{M}_{t_{0}}>F(\widetilde{A}_{t_{0}})-f(\widetilde{A}_{t_{0}})\varphi(\widetilde{A}_{t_{0}})=1$$
and we deduce that
$$\Pv^{'}(\exists t\geq\overline{g}, X_{t}>\varphi(A_{t})|\mathcal{F}_{\overline{g}}^{g})=\Pv^{'}\left(\sup_{t\geq\overline{g}}{M_{t}}>1|\mathcal{F}_{\overline{g}}^{g}\right)$$
$$\Pv^{'}(\exists t\geq\overline{g},X_{t}>\varphi(A_{t})|\mathcal{F}_{\overline{g}}^{g})=\Pv^{'}\left({\sup_{t\geq\overline{g}}{\frac{M_{t}}{M_{\overline{g}}}}>\frac{1}{M_{\overline{g}}}}|\mathcal{F}_{\overline{g}}^{g}\right)$$
Therefore from lemma \ref{l1}, we get:
$$\Pv^{'}(\exists t\geq\overline{g}, X_{t}>\varphi(A_{t})|\mathcal{F}_{\overline{g}}^{g})=M_{\overline{g}}\wedge1.$$
As in proof of theorem 3.2 of A.Nikeghbali \cite{nik}, it is enough to replace $\varphi$ by the function $\varphi_{u}$ defined as
$$\varphi_{u}(x)= \left \lbrace \begin{array}{l}
                      \varphi(x)$ $if$ $x<u\\
                      \infty$ $otherwise
                  \end{array} \right.$$
to obtain the second identity of the theorem.
\end{proof}
\begin{coro}
If in addition to the assumptions of the previous theorem, we have: $H\subset \{t: X_{t}=0\}$ and $A_{\overline{g}}=0$. Then,
\begin{equation}
\Pv^{'}(\exists t\geq\overline{g}, X_{t}>\varphi(A_{t}))=1-\exp\left(-\int_{0}^{+\infty}{\frac{dz}{\varphi(z)}}\right)
\end{equation}
and
\begin{equation}
\Pv^{'}(\exists t \in [\overline{g},\overline{g}+\tau_{u}], X_{t}>\varphi(A_{t}))=1-\exp\left(-\int_{0}^{u}{\frac{dz}{\varphi(z)}}\right)   
\end{equation}
\end{coro}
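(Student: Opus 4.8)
The plan is to deduce the two unconditional identities directly from the conditional ones established in the preceding theorem, by showing that under the two extra hypotheses the right-hand sides $M_{\overline{g}}\wedge 1$ and $M^{u}_{\overline{g}}\wedge 1$ are in fact deterministic constants. Once this is known the conditioning on $\mathcal{F}^{g}_{\overline{g}}$ becomes irrelevant, and one simply takes $\Pv^{'}$-expectations of both sides: since $\Pv^{'}$ is a probability measure, the expectation of the conditional probability of an event equals its unconditional probability, while the expectation of a constant is that same constant.

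First I would pin down the value of the ingredients at time $\overline{g}$. The hypothesis supplies $A_{\overline{g}}=0$ outright. For $X_{\overline{g}}$ I would argue exactly as in Corollary \ref{c0}: since $D$ is continuous, the set $H=\{t:D_{t}=0\}$ is closed, so $\overline{g}=0\vee\sup H$ satisfies $D_{\overline{g}}=0$, i.e. $\overline{g}\in H$; the inclusion $H\subset\{t:X_{t}=0\}$ then forces $X_{\overline{g}}=0$.

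Next I would substitute these two values into the definitions $M_{t}=F(A_{t})-f(A_{t})X_{t}$ and $M^{u}_{t}=F_{u}(A_{t})-f_{u}(A_{t})X_{t}$. At $t=\overline{g}$ the second term vanishes (because $X_{\overline{g}}=0$) and the first is evaluated at $A_{\overline{g}}=0$, so
$$M_{\overline{g}}=F(0)=1-\exp\left(-\int_{0}^{+\infty}\frac{dz}{\varphi(z)}\right),\qquad M^{u}_{\overline{g}}=F_{u}(0)=1-\exp\left(-\int_{0}^{u}\frac{dz}{\varphi(z)}\right).$$
Both quantities are strictly smaller than $1$, the exponentials being strictly positive (recall that in the proof of the preceding theorem $1/\varphi$ may be taken bounded and integrable). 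Hence $M_{\overline{g}}\wedge 1=F(0)$ and $M^{u}_{\overline{g}}\wedge 1=F_{u}(0)$, which are non-random. Plugging these into the two conditional identities of the previous theorem and integrating with respect to $\Pv^{'}$ yields precisely the two announced formulas.

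The computation is essentially a one-line substitution, so the only step requiring any care is the justification of $X_{\overline{g}}=0$, which rests on the continuity of $D$ (guaranteeing $\overline{g}\in H$) together with the inclusion $H\subset\{t:X_{t}=0\}$. The only other point worth stating explicitly is the harmless remark that $F(0),F_{u}(0)<1$, which is what disposes of the truncation $\wedge 1$. I do not anticipate any genuine obstacle beyond making these two observations precise.
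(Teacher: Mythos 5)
Your proof is correct and follows essentially the same route as the paper, which simply observes that under the extra hypotheses $M_{\overline{g}}$ and $M^{u}_{\overline{g}}$ reduce to the deterministic constants $F(0)$ and $F_{u}(0)$; you supply the details (why $A_{\overline{g}}=0$ and $X_{\overline{g}}=0$, and why the truncation $\wedge 1$ is inactive) that the paper leaves implicit. The only point worth adding is the degenerate case $H=\emptyset$, where $\overline{g}=0$ need not belong to $H$ so your argument via $D_{\overline{g}}=0$ does not apply; but there $X_{\overline{g}}=X_{0}=M_{0}+A_{0}=0$ directly from the definition of the class, so the conclusion is unaffected.
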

\begin{proof}
It is enough to see that under these assumptions, we have:
$$M_{\overline{g}}=1-\exp\left(-\int_{0}^{+\infty}{\frac{dz}{\varphi(z)}}\right)$$
and
$$M^{u}_{\overline{g}}=1-\exp\left(-\int_{0}^{u}{\frac{dz}{\varphi(z)}}\right).$$
Note that we could also use the corollary \ref{c0} for easily demonstrate this result.

\end{proof}

Now, we shall extend the corollary 3.6 of A.Nikeghbali \cite{nik}.
\begin{coro}
Let $X$ be an adapted process with respect to the filtration $(\mathcal{F}_{t})_{t\geq0}$ which vanishes at zero. Assume that $(X_{t})_{t\geq0}$ and $(X^{2}_{t}-t)_{t\geq0}$ are $(\Qv,\Pv)-$ local martingales. Let $(S_{t})_{t\geq0}$ denote the supremum process of $X$. Then, for all nonnegative Borel function  $\varphi$, we have:
\begin{equation}
\Qv(\forall t\geq0, S_{t}-X_{t}\leq\varphi(S_{t}))=\Qv(1)\exp{\left(-\int_{0}^{+\infty}{\frac{dx}{\varphi(x)}}\right)}
\end{equation}
Furthermore, if we let $T_{x}$ denote the stopping time:
$$T_{x}=inf\{t\geq0; S_{t}>x\}=inf\{t\geq0; X_{t}>x\}$$
then for any nonnegative Borel function $\varphi$, we have:
\begin{equation}
\Qv(\forall t\geq T_{x}, S_{t}-X_{t}\leq\varphi(S_{t}))=\Qv(1)\exp{\left(-\int_{0}^{x}{\frac{du}{\varphi(u)}}\right)}
\end{equation}
\end{coro}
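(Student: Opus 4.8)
The plan is to realize the ``drawdown'' $Y_t:=S_t-X_t$ as a process of class $\left(\sum(H)\right)$ and then to run a $\Qv$-analogue of Doob's maximal identity on the $(\Qv,\Pv)$-local martingale attached to $Y$ by Theorem \ref{t1}. First I would observe that, since $X$ (a $(\Qv,\Pv)$-local martingale with $\langle X\rangle_t=t$) is continuous and $S$ increases only on $\{t:\ X_t=S_t\}=\{t:\ Y_t=0\}$, the decomposition $Y_t=(-X_t)+S_t$ exhibits $Y$ as a nonnegative process of class $\left(\sum(H)\right)$ with martingale part $-X$ and increasing part $A=S$; moreover $A_\infty=S_\infty=\infty$ because $\langle X\rangle_\infty=\infty$. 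Applying Theorem \ref{t1} with the functions $F,f$ introduced before the previous theorem, the process
\[
M_t:=F(S_t)-f(S_t)\,Y_t
\]
is a continuous nonnegative $(\Qv,\Pv)$-local martingale with $\lim_{t\to\infty}M_t=0$ (exactly as established in the proof of the previous theorem). The point of this choice of $F,f$ is that $M_t\ge 1\Leftrightarrow Y_t\ge\varphi(S_t)$, so the events in the statement become the level-crossing events $\{\sup_{t\ge\rho}M_t\ge 1\}$.

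The heart of the argument is the following maximal identity for $\Qv$. Fix a stopping time $\rho$ and set $\sigma:=\inf\{t\ge\rho:\ M_t\ge 1\}$. Since $M_{\cdot\wedge\sigma}$ is bounded by $1$ and $D$ is bounded, the $\Pv$-local martingale $(MD)^{\sigma}$ is bounded, hence a uniformly integrable $\Pv$-martingale, and optional stopping gives $\E_\Pv[M_\sigma D_\sigma\mathbf 1_{\sigma<\infty}+M_\infty D_\infty\mathbf 1_{\sigma=\infty}]=\E_\Pv[M_\rho D_\rho]$. On $\{\sigma<\infty\}$ continuity yields $M_\sigma=1$, while on $\{\sigma=\infty\}$ the factor $M_\infty=0$ annihilates the (possibly sign-changing) term $D_\infty$; using $\E_\Pv[D_\sigma\mathbf 1_{\sigma<\infty}]=\Qv(\sigma<\infty)$ one obtains the clean identity $\Qv(\sigma<\infty)=\E_\Pv[M_\rho D_\rho]$.

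It then remains to evaluate the right-hand side for the two choices of $\rho$. For $\rho=0$ we have $Y_0=S_0=0$, so $M_0=F(0)=1-\exp\!\left(-\int_0^{+\infty}\frac{du}{\varphi(u)}\right)$ is deterministic and $\E_\Pv[D_0]=\Qv(1)$; hence $\Qv(\exists t,\ Y_t\ge\varphi(S_t))=F(0)\,\Qv(1)$, and passing to the complement gives the first identity. For $\rho=T_x$ the continuity of $X$ forces $S_{T_x}=X_{T_x}=x$, whence $Y_{T_x}=0$ and $M_{T_x}=F(x)$ is again deterministic; since $S_\infty=\infty$ gives $T_x<\infty$ a.s.\ and $\E_\Pv[D_{T_x}]=\E_\Pv[D_\infty]=\Qv(1)$, we get $\Qv(\exists t\ge T_x,\ Y_t\ge\varphi(S_t))=F(x)\,\Qv(1)$, and the complement equals $\Qv(1)\big(1-F(x)\big)$, which is the asserted right-hand side.

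The main obstacle is the optional-stopping step in the signed setting: a priori one cannot stop freely with respect to $\Qv$, and $D_\infty$ changes sign, so the usual Doob maximal identity (Lemma \ref{l1}, stated under $\Pv^{'}$) does not transfer verbatim. The device that makes everything go through is that $M$ is \emph{nonnegative with limit $0$}: this both bounds $(MD)^{\sigma}$ before $\sigma$ and, crucially, kills the boundary contribution on $\{\sigma=\infty\}$, so the sign of $D_\infty$ never enters. The secondary technical points to verify are the continuity of $X$ and the identity $\langle X\rangle_t=t$ (guaranteeing $S_\infty=\infty$ and a continuous supremum for $M$), together with the convergence $\lim_{t\to\infty}M_t=0$; all of these are inherited from the analysis of the preceding theorem.
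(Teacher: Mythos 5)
Your route is genuinely different from the paper's. The paper's proof is a two--line reduction: it sets $G=\sigma(X_{t},\,t\geq0)$ and invokes Chavez's signed--measure version of L\'evy's theorem (Theorem 1 of \cite{chav}): either $\Qv(1)=0$, in which case $\Qv|_{G}=0$ and both identities are trivial, or $\frac{1}{\Qv(1)}\Qv|_{G}$ is a probability measure under which $X$ is a Brownian motion; since both events belong to $G$, Corollary 3.6 of \cite{nik} applied under that probability gives the two formulas after multiplication by $\Qv(1)$. You instead stay under $\Pv$, exhibit $Y=S-X$ as an element of $\left(\sum(H)\right)$, form the Az\'ema--Yor type $(\Qv,\Pv)$-local martingale $M=F(S)-f(S)Y$ via Theorem \ref{t1}, and prove a signed--measure maximal identity by optional stopping of the bounded $\Pv$-martingale $(MD)^{\sigma}$. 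This is more self-contained (it bypasses Chavez's theorem entirely), and your key observation --- that nonnegativity of $M$ together with $M_{\infty}=0$ neutralizes the sign of $D_{\infty}$ on $\{\sigma=\infty\}$ --- is exactly the right mechanism; it is in effect a $\Qv$-version of Doob's maximal identity, which the paper only states in the $\Pv^{'}$ form (Lemma \ref{l1}).

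There is, however, one step you cannot import ``exactly as established in the proof of the previous theorem'': the claim $\lim_{t\rightarrow\infty}M_{t}=0$, equivalently $S_{\infty}=\infty$. In that theorem $A_{\infty}=\infty$ is a \emph{hypothesis} and the limit is derived under $\Pv^{'}$; here $A=S$ and the explosion of $S$ must be proved. Your justification ``$S_{\infty}=\infty$ because $\langle X\rangle_{\infty}=\infty$'' is not valid under $\Pv$: there $X$ is only a semimartingale, and a semimartingale with $[X]_{t}=t$ need not have unbounded supremum. What is true is that, by Remark \ref{mart} applied to $X$ and to $X^{2}-t$, the shifted process $\widetilde{X}$ is a $\Pv^{'}$-Brownian motion, so $S_{\infty}=\infty$ and hence $M_{t}\rightarrow0$ and $T_{x}<\infty$ hold $\Pv^{'}$-a.s., i.e.\ $\Pv$-a.s.\ on $\{D_{\infty}\neq0\}$ only. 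The patch is short but must be written: on $\{\sigma=\infty\}\cap\{D_{\infty}=0\}$ one has $|M_{t}D_{t}|\leq|D_{t}|\rightarrow0$ because $M^{\sigma}\leq1$, while on $\{D_{\infty}\neq0\}$ the $\Pv^{'}$ argument gives $M_{t}\rightarrow0$; together these yield $(MD)^{\sigma}_{\infty}=D_{\sigma}\mathbf{1}_{\{\sigma<\infty\}}$ and hence your identity $\Qv(\sigma<\infty)=\E_{\Pv}[M_{\rho}D_{\rho}]$. The same splitting is needed to get $\E_{\Pv}[D_{T_{x}}\mathbf{1}_{\{T_{x}<\infty\}}]=\Qv(1)$. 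Finally, continuity of $X$ (used for $S_{T_{x}}=x$, for the continuity of $M$, and for $dS$ being carried by $\{Y=0\}$) does not follow from the stated hypotheses; it is an implicit assumption here, just as it is in the paper's appeal to Chavez's theorem.
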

\begin{proof}
Let us note 
$$G=\sigma(X_{t}, t\geq0).$$
According to theorem 1 of J.R.Chavez \cite{chav}, we have:\\
if $\Qv(1)=0$, hence $\Qv|_{G}=0$. In this case, the equalities of the corollary are verified. Otherwise, $\frac{1}{\Qv(1)}\Qv|_{G}$ is a probability measure under which $(X_{t})_{t\geq0}$ is a Brownian motion. Then, $(S_{t}-X_{t})_{t\geq0}$ is also of class $\left(\sum\right)$ with respect to  $\frac{1}{\Qv(1)}\Qv|_{G}$.
Consequently from the corollary 3.6 of A.Nikeghbali \cite{nik}, we get:
$$\frac{1}{\Qv(1)}\Qv(\forall t\geq0, S_{t}-X_{t}\leq\varphi(S_{t}))=\exp{\left(-\int_{0}^{+\infty}{\frac{dx}{\varphi(x)}}\right)}$$
and
$$\frac{1}{\Qv(1)}\Qv(\forall t\geq T_{x}, S_{t}-X_{t}\leq\varphi(S_{t}))=\exp{\left(-\int_{0}^{x}{\frac{du}{\varphi(u)}}\right)}.$$
This completes the proof.
\end{proof}

Now, we shall study the distribution of $A_{\infty}$
\begin{thm}
Let $X$ be a process of class $\left(\sum(H)\right)$ and of class $(\Dv)$ such that $H\subset \{t: X_{t}=0\}$ and $A_{\overline{g}}=0$. And define
$$\lambda(x)=\E^{'}[X_{\infty}|A_{\infty}=x].$$
$\E^{'}$ is the expectation with respect to $\Pv^{'}$.
Assume that $\lambda(A_{\infty})\neq0$. Then, if we note. 
$$b\equiv inf\{u: \Pv^{'}(A_{\infty}\geq u)=0\},$$ 
we have:
$$\Pv^{'}(A_{\infty}>x)=\exp\left(-\int_{0}^{x}\frac{dz}{\lambda(z)}\right), x<b.$$
\end{thm}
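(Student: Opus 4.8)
The plan is to test $X$ against the martingales produced by Corollary \ref{r1}, push them through the time shift of Remark \ref{mart} so as to work under $\Pv^{'}$, and then read off an identity relating the law of $A_{\infty}$ to $\lambda$. First I would fix a bounded Borel function $f$ with compact support and set $F(x)=\int_{0}^{x}f(z)\,dz$. By Corollary \ref{r1}, the process $M^{f}_{t}=F(A_{t})-f(A_{t})X_{t}$ is a uniformly integrable $(\Qv,\Pv)$-martingale. Since $D$ vanishes on $H$, Remark \ref{mart} applies and shows that $\widetilde{M}^{f}=M^{f}_{\cdot+\overline{g}}$ is a uniformly integrable $\Pv^{'}$-martingale for $(\mathcal{F}_{t+g})$. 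I would then check that it starts at $0$: indeed $\widetilde{M}^{f}_{0}=F(A_{\overline{g}})-f(A_{\overline{g}})X_{\overline{g}}$, and this vanishes because $A_{\overline{g}}=0$ gives $F(A_{\overline{g}})=0$, while $\overline{g}\in H\subset\{t:X_{t}=0\}$ forces $X_{\overline{g}}=0$ (the case $H=\emptyset$ being covered by $X_{0}=M_{0}+A_{0}=0$). Finally $\widetilde{A}_{\infty}=A_{\infty}$ and $\widetilde{X}_{\infty}=X_{\infty}$.

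The martingale being uniformly integrable and null at $0$, its terminal $\Pv^{'}$-expectation is zero, which gives $\E^{'}[F(A_{\infty})]=\E^{'}[f(A_{\infty})X_{\infty}]$. I would then condition the right-hand side on $\sigma(A_{\infty})$: since $f(A_{\infty})$ is $\sigma(A_{\infty})$-measurable, the very definition of $\lambda$ yields $\E^{'}[f(A_{\infty})X_{\infty}]=\E^{'}\big[f(A_{\infty})\lambda(A_{\infty})\big]$. Writing $\mu$ for the law of $A_{\infty}$ under $\Pv^{'}$ and $\Phi(z)=\Pv^{'}(A_{\infty}>z)$, an application of Fubini to $\E^{'}[F(A_{\infty})]=\int_{0}^{\infty}\big(\int_{0}^{x}f(z)\,dz\big)\,\mu(dx)$ turns the left-hand side into $\int_{0}^{\infty}f(z)\Phi(z)\,dz$, while the right-hand side is $\int_{0}^{\infty}f(z)\lambda(z)\,\mu(dz)$. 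Letting $f$ range over all bounded Borel functions with compact support, I would conclude the identity of measures $\Phi(z)\,dz=\lambda(z)\,\mu(dz)$ on $[0,b)$.

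From here the conclusion is a one-dimensional computation. The left member $\Phi(z)\,dz$ is absolutely continuous, hence carries no atom; because $\lambda(A_{\infty})\neq0$, this forces $\mu$ to be atomless on $[0,b)$, so $\Phi$ is continuous there and $\mu(dz)=-\,d\Phi(z)$. The measure identity then reads $-\lambda(z)\,d\Phi(z)=\Phi(z)\,dz$, that is $d\log\Phi(z)=-dz/\lambda(z)$ on $\{z<b\}$, where $\Phi>0$; integrating from $0$ to $x$ and using $\Phi(0)=\Pv^{'}(A_{\infty}>0)=1$ gives $\Phi(x)=\exp\big(-\int_{0}^{x}dz/\lambda(z)\big)$ for $x<b$.

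The main obstacle I anticipate is not the martingale computation but the final normalization and regularity: one must argue that $A_{\infty}>0$ holds $\Pv^{'}$-almost surely so that $\Phi(0)=1$, and one must justify that the passage from the tested identity to the pointwise measure identity, and then to the logarithmic equation, is licit. This is exactly where the hypothesis $\lambda(A_{\infty})\neq0$ is decisive, since it guarantees the atomlessness, hence the continuity of $\Phi$, that makes the Stieltjes integration of $d\log\Phi=-dz/\lambda$ unambiguous. A shorter but less self-contained route would be to invoke Corollary \ref{c0}, which makes $\widetilde{X}$ a process of class $\big(\sum\big)$ under $\Pv^{'}$ with the same $A_{\infty}$ and the same $\lambda$, and then to apply directly the corresponding theorem of A.Nikeghbali \cite{nik}.
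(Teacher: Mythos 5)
Your proposal is correct and follows essentially the same route as the paper's own proof: Corollary \ref{r1} produces the uniformly integrable $(\Qv,\Pv)$-martingale $F(A_{t})-f(A_{t})X_{t}$, the shift of Remark \ref{mart} turns it into a $\Pv^{'}$-martingale vanishing at $0$, and the resulting identity $\E^{'}[F(A_{\infty})]=\E^{'}[f(A_{\infty})X_{\infty}]$ is converted into the measure equation $\Pv^{'}(A_{\infty}>z)\,dz=\lambda(z)\,\nu(dz)$ and integrated. Your treatment of the final step is in fact slightly more careful than the paper's (you justify atomlessness of the law of $A_{\infty}$ and the normalization $\Pv^{'}(A_{\infty}>0)=1$, which the paper takes for granted), but the argument is the same.
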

\begin{proof}
Let $f$ be a bounded Borel function with compact support; from corollary \ref{r1},
$$M_{t}=F(A_{t})-f(A_{t})X_{t}$$
is uniformly integrable $(\Qv,\Pv)-$ martingale. Hence, $(\widetilde{M}_{t}=M_{t+\overline{g}})_{t\geq0}$ is a \\$\Pv^{'}-$martingale. Moreover $\widetilde{M}_{0}=0$ (see remark ). Then
\begin{equation}\label{e}
\E^{'}[F(A_{\infty})]=\E^{'}[X_{\infty}f(A_{\infty})].
\end{equation}
Now, since $\lambda(A_{\infty})>0$, if $\nu(dx)$ denote the law of $A_{\infty}$, and $\overline{\nu}(x)=\nu([x,\infty[), \eqref{e}$ implies:
$$\int_{0}^{\infty}{dzf(z)\overline{\nu}(z)}=\int_{0}^{\infty}{\nu(dz)f(z)\lambda(z)}$$
and consequently,
\begin{equation}\label{e1}
\overline{\nu}(z)dz=\lambda(z)\nu(dz).
\end{equation}
Recall that $b\equiv inf\{u: \Pv^{'}(A_{\infty}\geq u)=0\}$; hence for $x<b$,
$$\int_{0}^{x}{\frac{dz}{\lambda(z)}}=\int_{0}^{x}{\frac{\nu(dz)}{\overline{\nu}(z)}}\leq\frac{1}{x}<\infty$$
and integrating \eqref{e1} between 0 and $x$, for $x<b$ yields
$$\overline{\nu}(x)=\exp\left(-\int_{0}^{x}{\frac{dz}{\lambda(z)}}\right),$$
and the result of the theorem follows easily.
\end{proof}

Note that a martingale with respect to a signed measure $\Qv$ in the meaning of J.R.Chavez, is necessarily a $\Pv-$semimartingale. It is for this unique reason that we can apply stochastic calculus on this family of processes. But, a martingale with respect to a signed measure $\Qv$ defined by S.Beghdadi Sakrani, is not necessarily a $\Pv-$semimartingale. Hence, we can not do stochastic calculus on these processes. However, S.Beghdadi Sakrani in \cite{sak} has established a theory of stochastic calculus for signed measures taking into account our famous processes. Therefore, we shall establish an extension of class $\big(\sum\big)$ in this theory.

\section{Presentation of second class of processes.}

In this section, we shall take $\Pv =|\Qv|$, where $\Qv$ is a signed measure such that $|\Qv|(\Omega)=1$. It will be necessary to recall some properties of stochastic calculus for signed measures before  defining the new class of processes.
\subsection{Some general properties of stochastic calculus for signed measures.}

We shall begin by give the following definitions.
\begin{defn} 
Let $X$ be an adapted process with respect to the filtration $\mathcal{F}$.
\begin{enumerate}
	\item $X$ is called a $\Qv-$martingale if: $\E[|X_{t}|]<+\infty$, $\forall t\geq0$ and $\Qv(X_{T})=\Qv(X_{0})$ for any bounded stopping time $T$.
	\item $X$ is called uniformly integrable $\Qv-$martingale if $XD$ is  a $\Pv$-martingale which is uniformly integrable .
	\item $X$ is a $\Qv$-local martingale if $DX$ is a $\Pv$-local martingale.
\end{enumerate} 

\end{defn}
\begin{defn}\hfill
\begin{enumerate}
	\item An adapted process $ (X _ {t}) _ {t\geq0} $ is said non-decreasing (resp. of finite variation) with respect to $\Qv$ if the process       $(\widetilde{X}_{t})_{t\geq0}=(X_{\overline{g}+t})_{t\geq0}$ is non-decreasing (resp. of finite variation) with respect to $\Pv$.
	\item  $A$ process $Y$ is said $\Qv-$semimartingale if $X = Y + A$, where $X$ is a uniformly integrable $\Qv-$martingale and $A$ is a process of finite variation with respect to $\Qv$. 
\end{enumerate}
\end{defn}

Now, we recall some properties of stochastic calculus for signed measures. we begin by quoting the following proposition of J. Azema and M. Yor $\cite{1}$.
\begin{prop}\label{14}
Let $(V_{t})_{t\geq0}$ be a $(\mathcal{F}_{g+t})_{t\geq0}$-optional process. There exists a unique $(\mathcal{F}_{t})_{t\geq0}$-optional process $(U_{t})_{t\geq0}$  which vanishes on $H$ such that $\forall t\geq0$, $U_{\overline{g}+t}=V_{t}$ and $U_{0}=V_{0}$ on $\{\overline{g}=0\}$. That defines a function,
$\rho:V\longmapsto U$.\\
$\rho$ is linear, nonnegative and preserves products.
\end{prop}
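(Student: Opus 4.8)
The plan is to construct $\rho$ explicitly as an un-shift by $\overline{g}$, to verify the prescribed identities and the vanishing on $H$ by hand, and then to concentrate on the one genuinely delicate point, namely that the resulting process is $(\mathcal{F}_{t})$-optional. For a given $(\mathcal{F}_{g+t})$-optional process $V$ I would set
$$U_{s}(\omega)=\begin{cases} V_{s-\overline{g}(\omega)}(\omega) & \text{if } s\geq\overline{g}(\omega),\\ 0 & \text{if } s<\overline{g}(\omega).\end{cases}$$
By construction $U_{\overline{g}+t}=V_{t}$ for every $t\geq0$, $U_{0}=V_{0}$ on $\{\overline{g}=0\}$, and $U\equiv0$ on $[0,\overline{g}]$; since $\overline{g}=\sup H$ and $D$ is continuous, $H\subset[0,\overline{g}]$, so $U$ vanishes on $H$. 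This settles existence and the two required identities.

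The main obstacle is proving that $U$ is $(\mathcal{F}_{t})$-optional. I would argue by a functional monotone class argument: the set of bounded $V$ for which $\rho(V)$ is $(\mathcal{F}_{t})$-optional is a vector space stable under bounded monotone limits, so it suffices to treat a generating family of the $(\mathcal{F}_{g+t})$-optional $\sigma$-field. I would take $V_{t}=\mathbf{1}_{[S,\infty)}(t)$ for an arbitrary $(\mathcal{F}_{g+t})$-stopping time $S$, together with the boundary term carried by $\{\overline{g}=0\}$ accounted for by the last clause of the statement. For such $V$ the process $U$ equals $\mathbf{1}_{[\overline{g}+S,\infty)}$ on $\{s\geq\overline{g}\}$ and vanishes on $\{s<\overline{g}\}$. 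The decisive fact is that $\overline{g}$ is the end of the $(\mathcal{F}_{t})$-optional set $H$, i.e.\ a last-exit time; by the general theory of enlargement at such a time of Azema and Yor, and using the very definition of $(\mathcal{F}^{g}_{t})$ together with the identity $(\mathcal{F}_{t+g})=(\mathcal{F}^{g}_{\overline{g}+t})$, the random set $[\overline{g}+S,\infty)$ truncated to $\{s\geq\overline{g}\}$ is $(\mathcal{F}_{t})$-optional whenever $S$ is an $(\mathcal{F}_{g+t})$-stopping time. This translation from the shifted filtration back to $(\mathcal{F}_{t})$ is the only step that is not routine, and it is where the whole construction rests.

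For uniqueness, suppose $U$ and $U'$ are both $(\mathcal{F}_{t})$-optional, null on $[0,\overline{g})$, and satisfy $U_{\overline{g}+t}=U'_{\overline{g}+t}=V_{t}$ for all $t\geq0$. Then $U-U'$ is an $(\mathcal{F}_{t})$-optional process indistinguishable from $0$ on $[0,\overline{g})$ and on $[\overline{g},\infty)$; the optional section theorem then forces $U$ and $U'$ to be indistinguishable. Finally, the three algebraic properties follow at once from the pointwise formula: linearity and nonnegativity (if $V\geq0$ then $U\geq0$) are read off directly, and for products one checks separately on $\{s\geq\overline{g}\}$, where $\rho(VW)_{\overline{g}+t}=V_{t}W_{t}=\rho(V)_{\overline{g}+t}\,\rho(W)_{\overline{g}+t}$, and on $\{s<\overline{g}\}$, where both sides vanish; hence $\rho(VW)=\rho(V)\rho(W)$.
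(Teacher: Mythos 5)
The paper itself does not prove this proposition: its ``proof'' is the citation to J.Az\'ema and M.Yor \cite{1}, so you are attempting to supply an argument the authors omitted. Your skeleton (explicit un-shift by $\overline{g}$, monotone class reduction to $V=\mathbf{1}_{[S,\infty)}$, pointwise verification of linearity, positivity and multiplicativity) has the right shape, but the one step you yourself flag as ``where the whole construction rests'' --- that the truncated shifted indicator is $(\mathcal{F}_{t})$-optional --- is precisely the content of the Az\'ema--Yor proposition, and you dispose of it by invoking ``the general theory of enlargement at such a time of Az\'ema and Yor.'' That is circular as a proof of this statement: $\overline{g}+S$ is in general not an $(\mathcal{F}_{t})$-stopping time, $\mathbf{1}_{[\overline{g}+S,\infty)}$ is a priori only $(\mathcal{F}^{g}_{t})$-optional, and the whole point of the theorem is that killing the process on $[0,\overline{g}]$ restores $(\mathcal{F}_{t})$-optionality. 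A genuine proof must use the structure of the end of an optional set (the balayage and section arguments of \cite{1}); as written, the decisive step is asserted, not established.

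There are also two boundary slips. With your formula $U_{\overline{g}}=V_{0}$, yet $\overline{g}\in H$ whenever $H\neq\emptyset$ because $D$ is continuous; so your $U$ does \emph{not} vanish on $H$ unless $V_{0}=0$, and your sentence ``$U\equiv 0$ on $[0,\overline{g}]$'' contradicts your own definition (it is $0$ only on $[0,\overline{g})$). The statement is only consistent if the identity $U_{\overline{g}+t}=V_{t}$ is imposed for $t>0$ --- which is exactly how the paper later uses $\rho$ in the proof of the Tanaka formulas --- with $U\equiv 0$ on all of $[0,\overline{g}]$ when $\overline{g}>0$ and the separate clause $U_{0}=V_{0}$ on $\{\overline{g}=0\}$. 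Relatedly, your uniqueness argument assumes both candidates vanish on all of $[0,\overline{g})$, whereas the hypothesis only gives vanishing on $H$: on an interval contiguous to $H$ and contained in $[0,\overline{g})$, an optional process that is null on $H$ and agrees with $V$ after $\overline{g}$ is not determined (add $W\mathbf{1}_{(a,b)}(t)$ with $W$ bounded $\mathcal{F}_{a}$-measurable), so uniqueness must be read in the class of processes null on $[0,\overline{g}]$, which is the class your construction actually produces. These points need to be repaired before the existence and uniqueness claims can be considered proved.
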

\begin{proof}
(See J.Azema and M.Yor \cite{1})
\end{proof}

The following definition which appears in S. Beghdadi-Sakrani  \cite{sak} defines the stochastic integral with respect to the signed measure $\Qv$.
\begin{defn}
Let $X$ be a uniformly integrable $\Qv$- martingale (resp. finite variation with respect to $\Qv$) and h, a progessive process such that:\\
$\int^{t}_{0}h^{2}_{\overline{g}+s}d\langle X_{\overline{g}+.}\rangle_{s}<+\infty$, $\forall t\geq0$ (resp. $\int^{t}_{0}|h_{\overline{g}+s}||d X_{\overline{g}+s}|<+\infty$). We define the stochastic integral of $h$ with respect to $X$ under the signed measure $\Qv $ by:
\begin{equation}
_\Qv\int^{t}_{0}h_{s}dX_{s}=\rho(\int_{0}^{.}h_{\overline{g}+s}d\widetilde{X}_{s})_{t}.
\end{equation}
\end{defn}

We note $[X]^{\Qv}=\rho([\widetilde{X}])_{.}$.
\begin{thm}
$[X]^{\Qv}$ is the unique process adapted, right continuous, non-decreasing on $[\overline{g}, +\infty[$ and null on $H$ such that $X^{2}-[X]^{\Qv}$ is a $\Qv$-local martingale.
\end{thm}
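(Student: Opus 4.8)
The plan is to carry every object over to the $\Pv^{'}$-world, compute there, and pull the result back, exploiting the two maps that organize this theory: the shift $Y\mapsto\widetilde{Y}=Y_{.+\overline{g}}$, which turns $\Qv$-objects into $\Pv^{'}$-objects, and the map $\rho$ of Proposition \ref{14}, which returns an $(\mathcal{F}_{g+t})$-process to the unique $(\mathcal{F}_{t})$-process vanishing on $H$ with that shift. The reason to route through $\Pv^{'}$ is that $X$ need not be a $\Pv$-semimartingale, whereas $\widetilde{X}$ is a genuine $\Pv^{'}$-local martingale, so $\widetilde{X}^{2}$ and $[\widetilde{X}]$ are unproblematic. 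I will use two facts repeatedly: first, $\widetilde{\rho(V)}=V$ and, for a process $M$ null on $H$, $M=\rho(\widetilde{M})$; second, the quotient theorem of Azema--Yor (Remark \ref{mart}) together with its converse sets up a bijection between $\Pv$-local martingales null on $H$ and $\Pv^{'}$-local martingales, given by $M\mapsto\widetilde{M}/|\widetilde{D}|$ with inverse $V\mapsto\rho(|\widetilde{D}|V)$.

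First I would record the regularity of $[X]^{\Qv}=\rho([\widetilde{X}])$. By Proposition \ref{14} it is $(\mathcal{F}_{t})$-optional, hence adapted, and vanishes on $H$. On $[\overline{g},+\infty[$ it coincides with the shifted bracket, $[X]^{\Qv}_{\overline{g}+t}=[\widetilde{X}]_{t}$, so there it is right-continuous and non-decreasing; right-continuity on all of $[0,+\infty[$ comes from the construction of $\rho$, which preserves right-continuity. This settles every statement of the theorem except the martingale identity.

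For the identity, since $X$ is a uniformly integrable $\Qv$-martingale, Remark \ref{mart} gives that $\widetilde{X}$ is a $\Pv^{'}$-local martingale for $(\mathcal{F}_{t+g})$. As the square bracket is invariant under a locally absolutely continuous change of measure, $[\widetilde{X}]$ is also the $\Pv^{'}$-quadratic variation of $\widetilde{X}$, so $\widetilde{X}^{2}-[\widetilde{X}]$ is a $\Pv^{'}$-local martingale. Using $\widetilde{X^{2}}=\widetilde{X}^{2}$ and $\widetilde{[X]^{\Qv}}=\widetilde{\rho([\widetilde{X}])}=[\widetilde{X}]$, this reads $\widetilde{X^{2}-[X]^{\Qv}}=\widetilde{X}^{2}-[\widetilde{X}]$. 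I then pull back: set $M=D(X^{2}-[X]^{\Qv})$, which is null on $H$ because $D$ vanishes there, and compute $\widetilde{M}/|\widetilde{D}|=sgn(\widetilde{D})(\widetilde{X}^{2}-[\widetilde{X}])$; since $D$ does not vanish after its last zero $g$, the factor $sgn(\widetilde{D})$ is constant, so $\widetilde{M}/|\widetilde{D}|$ is a $\Pv^{'}$-local martingale. Applying the inverse $V\mapsto\rho(|\widetilde{D}|V)$ to $V=\widetilde{M}/|\widetilde{D}|$ recovers $\rho(\widetilde{M})=M$, now as a $\Pv$-local martingale; that is, $X^{2}-[X]^{\Qv}$ is a $\Qv$-local martingale. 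Note this never requires $X$ itself to vanish on $H$: the factor $D$ absorbs the behaviour on $H$, and the computation touches $X$ only through $\widetilde{X}$.

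For uniqueness, suppose $B$ has the same four properties with $X^{2}-B$ a $\Qv$-local martingale. Then $[X]^{\Qv}-B$ is a $\Qv$-local martingale, null on $H$ and, as a difference of processes non-decreasing on $[\overline{g},+\infty[$, of finite variation there; transporting through the quotient theorem, $[\widetilde{X}]-\widetilde{B}$ is a finite-variation $\Pv^{'}$-local martingale, hence constant, and evaluating at a point of $H$ forces the constant to be $0$, so $\widetilde{B}=[\widetilde{X}]=\widetilde{[X]^{\Qv}}$; Proposition \ref{14} then gives $B=[X]^{\Qv}$. I expect the main obstacle to be the converse half of the quotient correspondence, namely that $V\mapsto\rho(|\widetilde{D}|V)$ is an honest inverse on local martingales, since it is exactly this that legitimizes pulling the $\Pv^{'}$-computation back to a true $\Qv$-local martingale. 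A secondary delicate point is the uniqueness step "a finite-variation $\Pv^{'}$-local martingale is constant": this is immediate when $\widetilde{X}$ is continuous (so $[\widetilde{X}]$ is continuous, consistent with the standing continuity of $D$), but for discontinuous $\widetilde{X}$ one would need to append the jump relation $\Delta[X]^{\Qv}=(\Delta X)^{2}$ to separate $[X]^{\Qv}$ from the predictable bracket.
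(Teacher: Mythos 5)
The paper itself offers no argument for this theorem: its ``proof'' is the citation ``(See S.Beghdadi Sakrani \cite{sak})''. Your reconstruction follows exactly the mechanism that the cited source, and this paper's own Remark \ref{mart} and Proposition \ref{p1}, rely on: shift everything beyond $\overline{g}$, do the classical local-martingale computation for $\widetilde{X}$ under $\Pv^{'}$, and return via $\rho$. The step you single out as the main obstacle --- that $V\mapsto\rho(|\widetilde{D}|V)$ carries $(\mathcal{F}_{g+t})$-local martingales under $\Pv^{'}$ back to $(\mathcal{F}_{t})$-local martingales under $\Pv$ vanishing on $H$ --- is not a gap: it is the converse (``product'') half of the Az\'ema--Yor quotient correspondence in \cite{1}, and it is precisely what Beghdadi-Sakrani uses to set up $\Qv$-local martingales and the bracket in the first place. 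Your identification $M=\rho(\widetilde{M})$ for $M=D(X^{2}-[X]^{\Qv})$ is legitimate by the uniqueness clause of Proposition \ref{14}, since $M$ is optional and vanishes on $H$ through the factor $D$. So the existence half of your proof is complete and is, in substance, the cited argument.

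The one genuine soft spot is the point you flag yourself, and it should be treated as a limitation of the statement rather than a footnote. The uniqueness argument rests on ``a finite-variation local martingale is constant'', which is false without continuity, while the theorem only requires the competitor $B$ to be right continuous; the surrounding section moreover allows $X$, hence $[\widetilde{X}]$, to jump. Even for continuous $\widetilde{X}$, a merely right-continuous non-decreasing $B$ could in principle differ from $[X]^{\Qv}$ by a purely discontinuous finite-variation $\Pv^{'}$-local martingale, so the uniqueness as literally stated needs either continuity of $B$ on $]\overline{g},+\infty[$ or the jump identity $\Delta\widetilde{B}=(\Delta\widetilde{X})^{2}$ appended to the characterizing properties. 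A second, smaller point: pinning your constant to $0$ by ``evaluating at a point of $H$'' presupposes $H\neq\emptyset$; when $H=\emptyset$ one has $\overline{g}=0$ and the normalization must come from $\widetilde{B}_{0}=0$, which the statement does not impose. Neither remark touches your existence proof; both are hypotheses under which the uniqueness claim actually holds.
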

\begin{proof}
(See S.Beghdadi Sakrani \cite{sak})
\end{proof}

To end this subsection, we recall Itô theorem for signed measures. 
\begin{thm}(Itô Theorem)\label{l2}\\
Let $X:=(X^{1},\ldots,X^{d})$ be a vector of $d$ right continuous $\Qv$-semimartingales and $F\in$ $\mathcal{C}^{2}(\R^{d},\R)$. Then $F(X)$ is a right contuous $\Qv$-semimartingale. And for any $t\geq0$,
\begin{equation}
F(X_{t})=F(X_{\gamma_{t}})+\sum_{i}{_\Qv\int^{t}_{0}}\frac{\partial F}{\partial x_{i}}(X_{s})dX^{i}_{s}+\frac{1}{2}\sum_{i,j}{_\Qv\int^{t}_{0}}\frac{\partial^{2} F}{\partial x_{i}\partial x_{j}}(X_{s})d[X^{i},X^{j}]^{\Qv}_{s}
\end{equation}
\end{thm}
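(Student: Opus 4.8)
The plan is to transport the classical It\^o formula from the shifted process $\widetilde X=X_{\cdot+\overline g}$ back to $X$ by means of the map $\rho$ of Proposition \ref{14}, exploiting that both the $\Qv$-stochastic integral and the bracket $[\cdot]^{\Qv}$ are \emph{defined} as $\rho$ applied to the corresponding $\Pv$-objects built from $\widetilde X$. First I would note that, by the definition of a $\Qv$-semimartingale together with Remark \ref{mart}, each coordinate $\widetilde X^i$ is an ordinary right continuous semimartingale under $\Pv^{'}$: its $\Qv$-martingale part becomes a $\Pv^{'}$-local martingale, while its $\Qv$-finite-variation part is, by definition, of finite variation with respect to $\Pv$ (a pathwise property, hence unchanged under $\Pv^{'}$). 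Thus $\widetilde X=(\widetilde X^1,\dots,\widetilde X^d)$ is a $\Pv^{'}$-semimartingale and the usual It\^o formula applies to $F(\widetilde X)$.

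Next I would write the classical It\^o formula for $F(\widetilde X)$ in increment form,
$$F(\widetilde X_t)-F(\widetilde X_0)=\sum_i\int_0^t\frac{\partial F}{\partial x_i}(\widetilde X_s)\,d\widetilde X^i_s+\frac12\sum_{i,j}\int_0^t\frac{\partial^2 F}{\partial x_i\partial x_j}(\widetilde X_s)\,d[\widetilde X^i,\widetilde X^j]_s=:I_t,$$
and then apply $\rho$ to the process $I$. Since $\rho$ is linear it distributes over the two sums, and since $[X^i,X^j]^{\Qv}=\rho([\widetilde X^i,\widetilde X^j])$ while $\widetilde{[X^i,X^j]^{\Qv}}=[\widetilde X^i,\widetilde X^j]$, the defining formula of the $\Qv$-integral identifies each summand: taking $h=\frac{\partial F}{\partial x_i}(X)$ one has $h_{\overline g+s}=\frac{\partial F}{\partial x_i}(\widetilde X_s)$, so that $\rho\big(\int_0^{\cdot}\frac{\partial F}{\partial x_i}(\widetilde X_s)\,d\widetilde X^i_s\big)_t={_\Qv\int_0^t}\frac{\partial F}{\partial x_i}(X_s)\,dX^i_s$, and similarly for the second-order term against $[X^i,X^j]^{\Qv}$. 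Hence $\rho(I)_t$ is exactly the right-hand side of the asserted $\Qv$-It\^o formula.

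It then remains to identify the left-hand side, that is, to prove $\rho(I)_t=F(X_t)-F(X_{\gamma_t})$. Rather than computing $\rho$ excursion by excursion, I would invoke the uniqueness clause of Proposition \ref{14}. Set $W_t:=F(X_t)-F(X_{\gamma_t})$. This process is $(\mathcal F_t)$-optional; it vanishes on $H$, since $t\in H$ forces $\gamma_t=t$ and hence $W_t=0$; and for $t>0$ one has $\gamma_{\overline g+t}=\overline g$ (there are no zeros of $D$ after $\overline g=\sup H$), so $W_{\overline g+t}=F(X_{\overline g+t})-F(X_{\overline g})=F(\widetilde X_t)-F(\widetilde X_0)=I_t$, with the value at $t=0$ matching on $\{\overline g=0\}$ because $\gamma_0=0$. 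Writing $V_t:=F(\widetilde X_t)-F(\widetilde X_0)=I_t$, the process $W$ satisfies all three characterizing conditions of $\rho(V)$, so uniqueness gives $W=\rho(V)=\rho(I)$, which is the desired identity, and $F(X)$ is a $\Qv$-semimartingale as a sum of a $\Qv$-local martingale and a $\Qv$-finite-variation part.

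The main obstacle is precisely this last identification. The map $\rho$ only prescribes values on $H$ and after $\overline g$, so the fact that $F(X_t)-F(X_{\gamma_t})$ is the correct reconstruction on \emph{every} excursion of $H^c$ (including those strictly before $\overline g$) cannot be seen directly; it is delivered entirely by the uniqueness in Proposition \ref{14}, which is the real engine of the argument. A secondary technical point to watch is the optionality of $t\mapsto X_{\gamma_t}$, and the fact that if the $X^i$ are genuinely discontinuous the classical formula for $F(\widetilde X)$ carries jump correction terms; these would be transported by $\rho$ in the same manner, the continuous display above being the representative case.
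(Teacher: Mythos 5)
The paper gives no proof of this theorem at all: it simply writes ``(See S.~Beghdadi Sakrani \cite{sak})''. Your argument is therefore a genuine reconstruction rather than a variant, and it is essentially correct: the route you take (apply the classical It\^o formula to the shifted semimartingale $\widetilde X$, push the resulting identity back through $\rho$ using linearity, the definition of ${}_\Qv\!\int$ and of $[\cdot]^{\Qv}$ as $\rho$ of the corresponding $\Pv$-objects, and then identify $\rho(I)_t$ with $F(X_t)-F(X_{\gamma_t})$ via the uniqueness clause of Proposition \ref{14}) is exactly the mechanism the paper itself uses one theorem later to prove the Tanaka formulas, including the key observation $\gamma_{\overline g+t}=\overline g$ and the verification that the candidate process is optional, vanishes on $H$, and has the right shift. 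Two of your caveats deserve emphasis but do not undermine the proof. First, in Section~3 the paper takes $\Pv=|\Qv|$, so $|D_\infty|=1$ and $\Pv'=\Pv$; the distinction you draw between $\Pv$ and $\Pv'$ when asserting that $\widetilde X$ is a classical semimartingale is harmless here. Second, you are right that the displayed formula, as stated, is only the continuous-semimartingale It\^o formula even though the hypothesis allows right continuous processes; for genuinely discontinuous $X^i$ the jump correction terms must be included before applying $\rho$ (they are $(\mathcal F_{g+t})$-optional and transport the same way), so the statement as printed should be read either for continuous $X$ or with those terms understood. With that reading, your proof is complete and supplies the argument the paper omits.
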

\begin{proof}
(See S.Beghdadi Sakrani \cite{sak})
\end{proof}

\subsection{Definition of second class of processes.}
\begin{defn}
Let $(X_{t})_{t\geq0}$ be a nonnegative process, which decomposes as:
$$X_{t}=N_{t}+A_{t}$$
We say that $(X_{t})_{t\geq0}$ is of class $\left(\sum_{s}(H)\right)$ if:
\begin{enumerate}
	\item $(N_{t})_{t\geq0}$ is a cadlag, uniformly integrable $\Qv$-martingale and null on $H$
	\item $(A_{t})_{t\geq0}$ is a process which is continuous on $]\overline{g},\infty[$ and null on $H$ such that: $\widetilde{A}_{t}=A_{\overline{g}+t}$ is non-decreasing
	\item the measure $(d\widetilde{A}_{t})$ is carried by the set $\{t: \widetilde{X}_{t}=0\}$.
\end{enumerate}
\end{defn}

 The following proposition provides the link that exists between the class $(\sum)$ and the class $\left(\sum_{s}(H)\right)$.
 
 \begin{prop}\label{p1}
 If $(X_{t})_{t\geq0}$ is of class $\left(\sum_{s}(H)\right)$, then $(\widetilde{X}_{t})_{t\geq0}$ is of class $(\sum)$.
 \end{prop}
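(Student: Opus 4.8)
The plan is to check that the time-shifted decomposition $\widetilde{X}_{t}=\widetilde{N}_{t}+\widetilde{A}_{t}$, where $\widetilde{N}_{t}=N_{\overline{g}+t}$ and $\widetilde{A}_{t}=A_{\overline{g}+t}$, fulfils the three axioms of the class $(\sum)$ (together with the underlying nonnegative local submartingale property), now read with respect to $\Pv^{'}$ and the filtration $(\mathcal{F}_{t+g})$. I would first record that in this section $\Pv=|\Qv|$ with $|D_{\infty}|=1$ a.s., so that $\Pv^{'}=\Pv$; all the martingale statements below may therefore equivalently be read under $\Pv$ itself.

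First I would transfer the martingale part across the shift $\overline{g}$. Since $N$ is a uniformly integrable $\Qv$-martingale, $ND$ is a uniformly integrable $\Pv$-martingale, and it is automatically null on $H$ because $D$ vanishes on $H$. This is exactly the input required by the Azema-Yor quotient theorem as used in Remark \ref{mart}: it yields that $(sgn(D_{\overline{g}+t})\widetilde{N}_{t})_{t\geq0}$ is a $\Pv^{'}$-martingale for $(\mathcal{F}_{t+g})$, and since $D$ is continuous and does not vanish on $]\overline{g},\infty[$ the factor $sgn(D_{\overline{g}+t})$ is constant, whence $\widetilde{N}$ is itself a cadlag $\Pv^{'}$-martingale. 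Its initial value vanishes, $\widetilde{N}_{0}=N_{\overline{g}}=0$, because $N$ is null on $H$ and $\overline{g}=0\vee\sup H$ belongs to $H$ (the set $H$ being closed by continuity of $D$).

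Next I would dispose of the finite-variation part, which is nearly immediate from the definition of $\left(\sum_{s}(H)\right)$: condition $(2)$ gives that $\widetilde{A}$ is non-decreasing, continuity of $A$ on $]\overline{g},\infty[$ makes $\widetilde{A}$ continuous on $]0,\infty[$, and $\widetilde{A}_{0}=A_{\overline{g}}=0$ since $A$ is null on $H$; together with right-continuity at the endpoint $\overline{g}$ this yields $\widetilde{A}$ continuous and vanishing at $0$. Condition $(3)$ is literally the assertion that $(d\widetilde{A}_{t})$ is carried by $\{t:\widetilde{X}_{t}=0\}$. Finally $\widetilde{X}\geq0$ because $X\geq0$, and being the sum of a local martingale and a non-decreasing process it is a nonnegative local submartingale; collecting these points shows that $\widetilde{X}$ is of class $(\sum)$. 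The one genuinely nontrivial ingredient, and hence the main obstacle, is the transfer of the martingale property through the random time $\overline{g}$ via the quotient theorem, together with the constancy of $sgn(D_{\overline{g}+\cdot})$ that lets one pass from $sgn(D_{\overline{g}+t})\widetilde{N}_{t}$ to $\widetilde{N}_{t}$; the behaviour at the origin (the identities $\widetilde{N}_{0}=\widetilde{A}_{0}=0$ and the right-continuity of $\widetilde{A}$ at $0$) is the accompanying delicate point, resting on $\overline{g}\in H$ and on the vanishing of $N$ and $A$ on $H$.
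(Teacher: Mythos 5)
Your proposal is correct and follows essentially the same route as the paper: the martingale part is transferred across the shift by $\overline{g}$ via the quotient theorem of Azema--Yor (exactly the content of Remark \ref{mart}, which the paper's proof invokes), and the properties of $\widetilde{A}$ are read off directly from the definition of $\left(\sum_{s}(H)\right)$. Your additional remarks (that $\Pv^{'}=\Pv$ here since $|D_{\infty}|=1$, and that $\widetilde{N}_{0}=0$ because $N$ vanishes on the closed set $H\ni\overline{g}$) only make explicit points the paper leaves implicit.
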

 \begin{proof}
 Suppose that $X=N+A$ is of class $\left(\sum_{s}(H)\right)$. Hence, $N$ is a uniformly integrable $\Qv$-martingale. Then, according to the remark \ref{mart},  $\widetilde{N}$ is a uniformly integrable $\Pv$-martingale. But by hypothesis, $\widetilde{A}$ is a continuous non-decreasing  process, with $\widetilde{A}_{0}=0$ and $(d\widetilde{A}_{t})$ is carried by the set $\{t: \widetilde{X}_{t}=0\}$.\\
 Then, $(\widetilde{X}_{t})_{t\geq0}$ is of class $(\sum)$.
 \end{proof}
 
 The following theorem characterises the processes of class $\left(\sum_{s}(H)\right)$.
 
 \begin{thm}
 The following are equivalent:
\begin{enumerate}
	\item $X=N+A$ is of class $\left(\sum_{s}(H)\right)$;
	\item There exists an adapted process $(C_{t})_{t\geq0}$ which is continuous on $]\overline{g},\infty[$, null on $H$ and non-decreasing with respect to $\Qv$ such that for every locally bounded Borel function $f$, the process
	$$_{\Qv}\int^{t}_{0}f(C_{s})dC_{s}-\rho(f(C_{\overline{g}+.}))_{t}X_{t}$$
is a uniformly integrable $\Qv$-martingale. Moreover, $(C_{t})_{t\geq0}$ is equal to $(A_{t})_{t\geq0}$.
\end{enumerate}
 \end{thm}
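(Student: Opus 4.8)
The plan is to transport the entire statement to the shifted processes $\widetilde{X}=X_{.+\overline{g}}$, where the calculus of S.~Beghdadi-Sakrani collapses to ordinary stochastic calculus, and then to invoke Proposition~\ref{p1} together with the classical characterization of the class $(\sum)$. The engine of the reduction is the duality between the map $\rho$ of Proposition~\ref{14} and the operation $Z\mapsto\widetilde{Z}$: by construction $\widetilde{\rho(V)}=V$, while the definition of the $\Qv$-stochastic integral gives $\widetilde{\left({}_{\Qv}\int_{0}^{.}h_{s}\,dZ_{s}\right)}=\int_{0}^{.}h_{\overline{g}+s}\,d\widetilde{Z}_{s}$. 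Combined with Remark~\ref{mart}, these yield the key dictionary entry that a process $M$ null on $H$ is a uniformly integrable $\Qv$-martingale if and only if $\widetilde{M}$ is a uniformly integrable $\Pv$-martingale. I would record this equivalence first, taking care of the normalization of the measure in force in this section, where $\Pv=|\Qv|$.

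For $(1)\Rightarrow(2)$ I would set $C=A$ and compute the image of the candidate process under the tilde operation. Using the two identities above and $F(x)=\int_{0}^{x}f(z)\,dz$, one gets $\widetilde{\left({}_{\Qv}\int_{0}^{.}f(C_{s})\,dC_{s}\right)}_{t}=\int_{0}^{t}f(\widetilde{C}_{s})\,d\widetilde{C}_{s}=F(\widetilde{C}_{t})-F(\widetilde{C}_{0})$, and since $C$ is null on $H$ and $\overline{g}\in H$ (because $D$ is continuous and $\overline{g}=0\vee\sup H$), we have $\widetilde{C}_{0}=C_{\overline{g}}=0$, so this equals $F(\widetilde{C}_{t})$. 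Likewise $\widetilde{\rho(f(C_{\overline{g}+.}))}_{t}=f(\widetilde{C}_{t})$, and since $\rho$ preserves products the tilde of the process in (2) is exactly $F(\widetilde{A}_{t})-f(\widetilde{A}_{t})\widetilde{X}_{t}$. By Proposition~\ref{p1} the process $\widetilde{X}$ is of class $(\sum)$ with non-decreasing part $\widetilde{A}$, so the characterization of the class $(\sum)$ of A.~Nikeghbali \cite{nik} (equivalently Theorem~\ref{t1} read in the case $\Pv=\Qv$) shows that $F(\widetilde{A}_{t})-f(\widetilde{A}_{t})\widetilde{X}_{t}$ is a uniformly integrable $\Pv$-martingale for every locally bounded Borel $f$. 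Reading the dictionary entry backwards, and noting that the process in (2) is null on $H$ (both of its terms are values of $\rho$), gives that it is a uniformly integrable $\Qv$-martingale.

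For $(2)\Rightarrow(1)$ I would run the same computation in reverse: set $A:=C$ and $N:=X-C$. Applying the tilde operation to the hypothesis and using the dictionary entry, $F(\widetilde{C}_{t})-f(\widetilde{C}_{t})\widetilde{X}_{t}$ is a uniformly integrable $\Pv$-martingale for every locally bounded Borel $f$; the characterization of $(\sum)$ then forces $\widetilde{X}$ to be of class $(\sum)$ with non-decreasing component $\widetilde{C}=\widetilde{A}$, which is the announced identification $C=A$. It remains to verify the three defining conditions of $\left(\sum_{s}(H)\right)$ for $X=N+A$: continuity of $A$ on $]\overline{g},\infty[$ and nullity on $H$ are part of the hypotheses on $C=A$; monotonicity of $\widetilde{A}$ and the fact that $(d\widetilde{A}_{t})$ is carried by $\{t:\widetilde{X}_{t}=0\}$ are exactly what membership of $\widetilde{X}$ in $(\sum)$ provides; and taking $f\equiv 1$ in (2) (so $F(x)=x$, and using $\widetilde{C}_{0}=0$ so that ${}_{\Qv}\int_{0}^{t}dC_{s}=C_{t}$) shows that $C_{t}-\rho(1)_{t}X_{t}$, which coincides with $-N_{t}$ off $H$ and vanishes on $H$, is a uniformly integrable $\Qv$-martingale, whence $N$ is one.

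The main obstacle is the careful justification of the two transport identities rather than any probabilistic difficulty. Establishing $\widetilde{\left({}_{\Qv}\int_{0}^{.}f(C_{s})\,dC_{s}\right)}_{t}=F(\widetilde{C}_{t})$ requires unwinding the definition of the $\Qv$-integral through $\rho$ and invoking the linearity and product-preserving properties of $\rho$ from Proposition~\ref{14}, while the martingale dictionary requires checking that Remark~\ref{mart}, stated there with the measure $\Pv'$, specializes correctly under the convention $\Pv=|\Qv|$ of this section and that the constancy of $\mathrm{sgn}(D_{.+\overline{g}})$ used there legitimately removes the sign factor. A secondary point needing care is the nullity of $N$ on $H$ in $(2)\Rightarrow(1)$: since statement (2) only sees $X$ through $\rho(f(\widetilde{C}))X$, which annihilates $X$ on $H$, one must read the nullity on $H$ as the standing convention for members of $\left(\sum_{s}(H)\right)$. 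Once these bookkeeping identities are in place, both implications follow mechanically from Proposition~\ref{p1} and the class $(\sum)$ characterization.
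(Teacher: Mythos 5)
Your proposal is correct and follows essentially the same route as the paper: reduce to the shifted process $\widetilde{X}$ via Proposition \ref{p1}, invoke A.~Nikeghbali's characterization of the class $(\sum)$ for $F(\widetilde{A}_{t})-f(\widetilde{A}_{t})\widetilde{X}_{t}$, and transport back through $\rho$ (the quotient theorem) using its linearity, product-preservation, and the nullity of $X$ on $H$. Your extra care about the nullity of $N$ on $H$ in $(2)\Rightarrow(1)$ is a point the paper itself glosses over, but it does not change the argument.
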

 \begin{proof}
 $(1)\Rightarrow(2)$\\
 $X$ is of class $\left(\sum_{s}(H)\right)\Rightarrow \widetilde{X}$ is of class $(\sum)$. According to the theorem 2.1 of A.Nikeghbali \cite{nik}, for every locally bounded Borel function $f$, we have:
 $$\int_{0}^{t}f(A_{\overline{g}+s})dA_{\overline{g}+s}-f(A_{\overline{g}+t})X_{\overline{g}+t}$$ 
is a uniformly integrable $\Pv$-martingale with respect to $(\mathcal{F}_{g+t})_{t\geq0}$.\\
Hence from the quotient theorem of J.Azema and M.Yor \cite{1},
$$\rho\big(\int_{0}^{.}f(A_{\overline{g}+s})dA_{\overline{g}+s}-f(A_{\overline{g}+.})X_{\overline{g}+.}\big)_{t}$$
is a uniformly integrable $\Qv$-martingale with respect to $(\mathcal{F}_{t})_{t\geq0}$.\\
Then,
$$_{\Qv}\int^{t}_{0}f(A_{s})dA_{s}-\rho(f(A_{\overline{g}+.}))_{t}\rho(X_{\overline{g}+.})_{t}$$
is a uniformly integrable $\Qv$-martingale. But $X$ is null on $H$.
Therefore we get,
$$_{\Qv}\int^{t}_{0}f(A_{s})dA_{s}-\rho(f(A_{\overline{g}+.}))_{t}X_{t}$$
is a uniformly integrable $\Qv$-martingale.\\
$(2)\Rightarrow(1)$\\
Suppose that There exists an adapted process $(C_{t})_{t\geq0}$ which is continuous on $]\overline{g},\infty[$, null on $H$ and non-decreasing with respect to $\Qv$ such that for every locally bounded Borel function $f$, the process
	$$_{\Qv}\int^{t}_{0}f(C_{s})dC_{s}-\rho(f(C_{\overline{g}+.}))_{t}X_{t}$$
is a uniformly integrable $\Qv$-martingale.\\
Then we obtain that,
 $$\int_{0}^{t}f(C_{\overline{g}+s})dC_{\overline{g}+s}-f(C_{\overline{g}+t})X_{\overline{g}+t}$$ 
is a uniformly integrable $\Pv$-martingale with respect to $(\mathcal{F}_{g+t})_{t\geq0}$.\\
Then, by theorem 2.1 of A.Nikeghbali \cite{nik}, $(X_{\overline{g}+t})$ is of class $(\sum)$ and $C_{\overline{g}+t}=A_{\overline{g}+t}$, $\forall t\geq0$.\\
Hence,
$$
\left \lbrace
\begin{array}{l}
X_{\overline{g}+t}-A_{\overline{g}+t}$ $is$ $a$ $uniformly$ $intingrable$ $local$ $martingale\\
C_{\overline{g}+t}=A_{\overline{g}+t}\\
(dA_{\overline{g}+t})$ $is$ $carried$ $by$ $\{t; X_{\overline{g}+t}=0\}
\end{array} \right.
$$
that implies,
$$
\left \lbrace
\begin{array}{l}
\rho\big(X_{\overline{g}+.}-A_{\overline{g}+.}\big)_{t}$ $is$ $a$ $uniformly$ $intingrable$ $\Qv-local$ $martingale\\
\rho\big(C_{\overline{g}+.}\big)_{t}=\rho\big(A_{\overline{g}+.}\big)_{t}\\
(dA_{\overline{g}+t})$ $is$ $carried$ $by$ $\{t; X_{\overline{g}+t}=0\}
\end{array} \right.
$$
Therefore,
$$
\left \lbrace
\begin{array}{l}
X_{t}-A_{t}$ $is$ $a$ $uniformly$ $intingrable$ $\Qv-local$ $martingale\\
C_{t}=A_{t}\\
(dA_{\overline{g}+t})$ $is$ $carried$ $by$ $\{t; X_{\overline{g}+t}=0\}
\end{array} \right.
$$
Consequently, $X$ is of class $\left(\sum_{s}(H)\right)$.
 \end{proof}
\begin{rem}
When f is nonnegative with $f(0)=0$, this means that
$(f (A_{t})X_{t})_{t\geq0}$ is of class $\left(\sum_{s}(H)\right)$ and its non-decreasing process under the signed measure $\Qv$ is 
$$G(A_{t})={_{\Qv}\int_{0}^{t}f(A_{s})dA_{s}}$$.
\end{rem}
indeed, by applying the theorem 2.2 of S.Beghdadi Sakrani \cite{sak}, we get:
$$f(A_{t})X_{t}={_{\Qv}\int_{0}^{t}f(A_{s})dN_{s}}+{_{\Qv}\int_{0}^{t}f(A_{s})dA_{s}}.$$
But, $M_{t}={_{\Qv}\int_{0}^{t}f(A_{s})dN_{s}}$ is a uniformly integrable $\Qv$-martingale beacause, $N$ is too (see proposition 2.3 of sak). Moreover,
 $$G(A_{\overline{g}+t})=\int_{0}^{t}f(A_{\overline{g}+s})dA_{\overline{g}+s}$$
 is a non-decreasing process, $(dG(A_{\overline{g}+t}))$ is a measure carried by $\{t; f(A_{\overline{g}+t})X_{\overline{g}+t}=0\}$, the processes $M$ and $G(A_{.})$ are null on $H$. Therefore, $(f (A_{t})X_{t})_{t\geq0}$ is of class $\left(\sum_{s}(H)\right)$.
 
The next proposition shows that the product of processes of class $\left(\sum_{s}(H)\right)$ with vanishing quadratic covariations with respect to signed measure
is again of class $\left(\sum_{s}(H)\right)$.

\begin{prop}
Let $(X_{t}^{1}),\ldots,(X_{t}^{n})$ be processes of class $\left(\sum_{s}(H)\right)$ such that \\
$\langle X^{i},X^{j} \rangle_{t}^{\Qv}=0$ for $i\neq j.$ Then $\Pi^{n}_{i=1}X^{i}_{t}$ is again of class $\left(\sum_{s}(H)\right)$.
\end{prop}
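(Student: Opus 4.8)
The plan is to reduce the claim about the class $\left(\sum_{s}(H)\right)$ to the corresponding statement for the ordinary class $(\sum)$ by passing through the post-$\overline{g}$ processes $\widetilde{X}^{i}$, exactly as Proposition \ref{p1} does. By Proposition \ref{p1}, each $\widetilde{X}^{i}=X^{i}_{\overline{g}+\cdot}$ is of class $(\sum)$ under $\Pv$ with respect to the filtration $(\mathcal{F}_{g+t})_{t\geq0}$. If I can show that the product $\Pi_{i=1}^{n}\widetilde{X}^{i}$ is of class $(\sum)$ under $\Pv$, then I must transport this back through $\rho$ to conclude that $\Pi_{i=1}^{n}X^{i}$ is of class $\left(\sum_{s}(H)\right)$. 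This mirrors the structure of the proof of the analogous Proposition in Section 2 (the product statement for $\left(\sum(H)\right)$), which itself proceeds by integration by parts and induction.

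**The main computation.**
First I would treat the case $n=2$. I would write the integration-by-parts formula for $\widetilde{X}^{1}\widetilde{X}^{2}$ under $\Pv$:
$$\widetilde{X}^{1}_{t}\widetilde{X}^{2}_{t}=\int_{0}^{t}\widetilde{X}^{1}_{u^{-}}d\widetilde{N}^{2}_{u}+\int_{0}^{t}\widetilde{X}^{2}_{u^{-}}d\widetilde{N}^{1}_{u}+\int_{0}^{t}\widetilde{X}^{1}_{u}d\widetilde{A}^{2}_{u}+\int_{0}^{t}\widetilde{X}^{2}_{u}d\widetilde{A}^{1}_{u}+[\widetilde{X}^{1},\widetilde{X}^{2}]_{t},$$
and here the hypothesis $\langle X^{i},X^{j}\rangle^{\Qv}_{t}=0$ is what makes the bracket term vanish: by the definition $[X]^{\Qv}=\rho([\widetilde{X}])$ and the fact that $\rho$ is injective on processes null on $H$, the vanishing of the $\Qv$-covariation forces $[\widetilde{X}^{1},\widetilde{X}^{2}]=0$. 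The first two integrals then combine into a $\Pv$-local martingale $\widetilde{M}$ vanishing at zero, while $\widetilde{A}_{t}:=\int_{0}^{t}\widetilde{X}^{1}_{u}d\widetilde{A}^{2}_{u}+\int_{0}^{t}\widetilde{X}^{2}_{u}d\widetilde{A}^{1}_{u}$ is continuous, non-decreasing, vanishes at zero, and its differential $d\widetilde{A}_{t}=\widetilde{X}^{1}_{t}d\widetilde{A}^{2}_{t}+\widetilde{X}^{2}_{t}d\widetilde{A}^{1}_{t}$ is carried by $\{t:\widetilde{X}^{1}_{t}\widetilde{X}^{2}_{t}=0\}$, since each $d\widetilde{A}^{i}_{t}$ is carried by $\{t:\widetilde{X}^{i}_{t}=0\}$. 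Thus $\widetilde{X}^{1}\widetilde{X}^{2}$ is of class $(\sum)$ under $\Pv$.

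**Transporting back and induction.**
Having established that $\widetilde{X}^{1}\widetilde{X}^{2}=\widetilde{N}+\widetilde{A}$ is of class $(\sum)$, I would apply $\rho$ to each component. Since $\rho$ preserves products (Proposition \ref{14}), $\rho(\widetilde{X}^{1}\widetilde{X}^{2})=\rho(\widetilde{X}^{1})\rho(\widetilde{X}^{2})=X^{1}X^{2}$, using that each $X^{i}$ is null on $H$ and hence equals $\rho(\widetilde{X}^{i})$. Applying the quotient theorem of Azema--Yor to the martingale part $\widetilde{N}$ shows that $\rho(\widetilde{N})$ is a uniformly integrable $\Qv$-martingale null on $H$, and $\rho(\widetilde{A})$ supplies the required non-decreasing part whose post-$\overline{g}$ differential is carried by the zeros of $\widetilde{X}^{1}\widetilde{X}^{2}$. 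This is precisely membership in $\left(\sum_{s}(H)\right)$. For general $n$, I would argue by induction: I must check that $\langle X^{1}X^{2},X^{3}\rangle^{\Qv}_{t}=0$, so that the inductive hypothesis applies to the two factors $X^{1}X^{2}$ and $X^{3}$.

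**The expected obstacle.**
The delicate point is not the integration by parts but verifying that the orthogonality hypothesis is preserved under multiplication, i.e. that $\langle X^{1}X^{2},X^{3}\rangle^{\Qv}=0$ follows from the pairwise vanishing of the $\Qv$-covariations. In the classical setting this is immediate because $[X^{1}X^{2},X^{3}]=\int X^{1}\,d[X^{2},X^{3}]+\int X^{2}\,d[X^{1},X^{3}]$, and both brackets vanish; the same identity should hold at the level of the $\widetilde{X}^{i}$ under $\Pv$, and then transfer via $\rho$ and the identity $[X]^{\Qv}=\rho([\widetilde{X}])$. I would therefore carry the induction entirely on the $\Pv$-side for the $\widetilde{X}^{i}$ and only apply $\rho$ at the very end, which keeps all the bracket manipulations inside the familiar semimartingale calculus and avoids having to develop a product rule for $[\cdot,\cdot]^{\Qv}$ directly.
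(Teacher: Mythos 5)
Your proof is correct and follows essentially the same route as the paper: the paper invokes the signed-measure integration-by-parts formula (Theorem 2.2 of Beghdadi--Sakrani) to write $X^{1}_{t}X^{2}_{t}={}_{\Qv}\!\int_{0}^{t}X^{2}_{s}dX^{1}_{s}+{}_{\Qv}\!\int_{0}^{t}X^{1}_{s}dX^{2}_{s}$, splits into a $\Qv$-martingale part and an increasing part, and inducts via $\langle X^{1}X^{2},X^{3}\rangle^{\Qv}=0$ --- which is exactly your computation on the shifted processes $\widetilde{X}^{i}$ under $\Pv$ followed by $\rho$, since the $\Qv$-integral is by definition $\rho$ of the $\Pv$-integral of the shifted processes. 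Your version merely unpacks the $\Qv$-calculus, and is if anything more explicit than the paper about why $[\widetilde{X}^{1},\widetilde{X}^{2}]=0$ and why the orthogonality hypothesis propagates through the induction step.
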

\begin{proof}
Since $\langle X^{1},X^{2} \rangle_{t}^{\Qv}=0$, the theorem  2.2 of S.Beghdadi Sakrani \cite{sak} yields
$$X^{1}_{t}X^{2}_{t}=_{\Qv}\int^{t}_{0}X_{s}^{2}dX_{s}^{1}+_{\Qv}\int^{t}_{0}X_{s}^{1}dX_{s}^{2}.$$
Hence,
$$X^{1}_{t}X^{2}_{t}=_{\Qv}\int^{t}_{0}X_{s}^{2}dN_{s}^{1}+_{\Qv}\int^{t}_{0}X_{s}^{1}dN_{s}^{2}+_{\Qv}\int^{t}_{0}X_{s}^{2}dA_{s}^{1}+_{\Qv}\int^{t}_{0}X_{s}^{1}dA_{s}^{2}$$
$N^{'}_{t}=_{\Qv}\int^{t}_{0}X_{s}^{2}dN_{s}^{1}+_{\Qv}\int^{t}_{0}X_{s}^{1}dN_{s}^{2}$ is a uniformly integrable $\Qv$-martingale null on $H$,    $A^{'}_{t}=_{\Qv}\int^{t}_{0}X_{s}^{2}dA_{s}^{1}+_{\Qv}\int^{t}_{0}X_{s}^{1}dA_{s}^{2}$ is a continuous process null on $H$ and non-decreasing with respect to $\Qv$ and $(dA^{'}_{.+\overline{g}})$ is carried by $\{t: X^{1}_{t+\overline{g}}X^{2}_{t+\overline{g}}=0\}$. Then, $(X_{t}^{1}X_{t}^{2})_{t\geq0}$ is of class $\left(\sum_{s}(H)\right)$. If $n\geq3$, then $\langle X^{1}X^{2},X^{3} \rangle_{t}^{\Qv}=0$ and the proposition follows by induction.
\end{proof}

The following theorem gives the Tanaka formulas for signed measures. It would be useful for the examples.
\begin{thm}
Let X be a right continuous $\Qv-$semimartingal. For all real $a$, we have:
\begin{equation} \label{eq1}
|X_{t}-a|=|X_{\gamma_{t}}-a|+_{\Qv}\int^{t}_{0}sgn(X_{s}-a)dX_{s}+_{\Qv}L_{t}^{a}(X)
\end{equation}

\begin{equation} \label{eq2}
(X_{t}-a)^{+}=(X_{\gamma_{t}}-a)^{+}+_{\Qv}\int^{t}_{0}1_{\{X_{s}>a\}}dX_{s}+\frac{1}{2}{_{\Qv}L_{t}^{a}(X)}
\end{equation}

\begin{equation} \label{eq3}
(X_{t}-a)^{-}=(X_{\gamma_{t}}-a)^{-}-_{\Qv}\int^{t}_{0}1_{\{X_{s}\leq a\}}dX_{s}+\frac{1}{2}{_{\Qv}L_{t}^{a}(X)}
\end{equation}
where ${_{\Qv}L_{t}^{a}(X)}=\rho \big(L_{.}^{a}(X_{.+\overline{g}})\big)_{t}$ and $(L_{t}^{a}(X_{.+\overline{g}}))_{t\geq0}$ is the classical semimartingale local time of $(X_{t+\overline{g}})_{t\geq0}$.
\end{thm}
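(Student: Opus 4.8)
The plan is to reduce everything to the \emph{classical} Tanaka formula applied to the ordinary right continuous semimartingale $\widetilde{X}=X_{\cdot+\overline{g}}$, and then to transport the resulting identity back through the map $\rho$ of Proposition \ref{14}. Since $X$ is a $\Qv$-semimartingale, $\widetilde{X}$ is a genuine $\Pv$-semimartingale: its uniformly integrable $\Qv$-martingale part becomes a uniformly integrable $\Pv$-martingale by Remark \ref{mart} (here $\Pv^{'}=\Pv$, since $\Pv=|\Qv|$ forces $|D_{\infty}|=1$), while its finite-variation part has finite variation with respect to $\Pv$ by definition. Hence the classical local time $L_{\cdot}^{a}(\widetilde{X})$ and all the classical integrals below are well defined.

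For \eqref{eq1} I would start from the classical Tanaka formula for $\widetilde{X}$,
$$|\widetilde{X}_{t}-a|-|\widetilde{X}_{0}-a|=\int_{0}^{t}sgn(\widetilde{X}_{s}-a)d\widetilde{X}_{s}+L_{t}^{a}(\widetilde{X}),$$
and apply $\rho$ to both sides as processes in $t$. On the right, linearity of $\rho$ together with the definition of the $\Qv$-stochastic integral (taking $h_{s}=sgn(X_{s}-a)$, so that $h_{\overline{g}+s}=sgn(\widetilde{X}_{s}-a)$) and the definition ${_{\Qv}L_{t}^{a}(X)}=\rho(L_{\cdot}^{a}(\widetilde{X}))_{t}$ transform the two terms into exactly ${_{\Qv}\int_{0}^{t}}sgn(X_{s}-a)dX_{s}+{_{\Qv}L_{t}^{a}(X)}$. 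Boundedness of the integrand guarantees the integrability demanded by the definition of the $\Qv$-integral.

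The key step, and the one I expect to be the main obstacle, is to identify the image under $\rho$ of the left-hand side, namely to prove
$$\rho\big(|\widetilde{X}_{\cdot}-a|-|\widetilde{X}_{0}-a|\big)_{t}=|X_{t}-a|-|X_{\gamma_{t}}-a|.$$
By the uniqueness asserted in Proposition \ref{14} it suffices to check that the candidate process $W_{t}=|X_{t}-a|-|X_{\gamma_{t}}-a|$ is $(\mathcal{F}_{t})$-optional, vanishes on $H$, and satisfies $W_{\overline{g}+t}=|\widetilde{X}_{t}-a|-|\widetilde{X}_{0}-a|$. Two elementary facts about the zero set of the continuous martingale $D$ do the work: first, if $t\in H$ then $\gamma_{t}=t$, so $W_{t}=0$ on $H$; second, since $\overline{g}=0\vee\sup H$ there is no zero of $D$ in $(\overline{g},\overline{g}+t]$, whence $\gamma_{\overline{g}+t}=\overline{g}$ and $W_{\overline{g}+t}=|X_{\overline{g}+t}-a|-|X_{\overline{g}}-a|=|\widetilde{X}_{t}-a|-|\widetilde{X}_{0}-a|$. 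Combining this identification with the right-hand computation of the previous paragraph yields \eqref{eq1}. The delicate points are invoking the uniqueness statement correctly (including the normalization $W_{0}=V_{0}$ on $\{\overline{g}=0\}$) and verifying optionality of $t\mapsto X_{\gamma_{t}}$.

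Formulas \eqref{eq2} and \eqref{eq3} follow by the very same mechanism, replacing $|\widetilde{X}-a|$ by $(\widetilde{X}-a)^{+}$ and $(\widetilde{X}-a)^{-}$ in the classical Tanaka formulas and reading off the integrands $1_{\{X_{s}>a\}}$ and $1_{\{X_{s}\leq a\}}$. Alternatively, once \eqref{eq1} is available, one can deduce \eqref{eq2} and \eqref{eq3} from the decompositions $(x-a)^{\pm}=\tfrac12\big(|x-a|\pm(x-a)\big)$ together with the case $h\equiv1$ of the same $\rho$-transport argument, which gives ${_{\Qv}\int_{0}^{t}}dX_{s}=X_{t}-X_{\gamma_{t}}$, and the linearity of the $\Qv$-integral, using $\tfrac12(sgn(X_{s}-a)+1)=1_{\{X_{s}>a\}}$.
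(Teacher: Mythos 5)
Your proposal is correct and follows essentially the same route as the paper: both reduce to the classical Tanaka formula for the $\Pv$-semimartingale $\widetilde{X}=X_{\cdot+\overline{g}}$ and then identify $\rho\big(|\widetilde{X}_{\cdot}-a|-|\widetilde{X}_{0}-a|\big)_{t}$ with $|X_{t}-a|-|X_{\gamma_{t}}-a|$ via the uniqueness clause of Proposition \ref{14}, using $\gamma_{\overline{g}+t}=\overline{g}$. Your explicit check that the candidate process vanishes on $H$ (since $\gamma_{t}=t$ there) is a detail the paper leaves implicit, but the argument is the same.
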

\begin{proof}\\
Since $X$ is a continuous and $(\mathcal{F}_{t})_{t\geq0}$-adapted process, hence\\ $(V_{t}=|X_{.+\overline{g}}-a|-|X_{\overline{g}}-a|)_{t\geq0}$ is a $(\mathcal{F}_{t+g})_{t\geq0}$-optional process. Then from the proposition \ref{14}, there exists a unique $(\mathcal{F}_{t})_{t\geq0}$-optional process $(U_{t})_{t\geq0}$, null on $H$ such that $\forall t>0$, $V_{t+\overline{g}}=U_{t+\overline{g}}$ and $V_{0}=U_{0}$ on $\{\overline{g}=0\}$.\\
Since $(|X_{t}-a|-|X_{\gamma_{t}}-a|)_{t\geq0}$ is a $(\mathcal{F}_{t})_{t\geq0}$-optional process and
$$(|X_{.}-a|-|X_{\gamma_{.}}-a|)_{t+\overline{g}}=|X_{t+\overline{g}}-a|-|X_{\gamma_{t+\overline{g}}}-a|$$
$$\hspace{4.25cm}=|X_{t+\overline{g}}-a|-|X_{\overline{g}}-a|$$
Because $\forall t\geq0$, $\gamma_{t+\overline{g}}=\overline{g}$.\\
Hence,
$$U_{.}=|X_{.}-a|-|X_{\gamma_{.}}-a|$$
Then we have:
$$|X_{t}-a|=|X_{\gamma_{t}}-a|+\rho(|X_{.+\overline{g}}-a|-|X_{\overline{g}}-a|)_{t}$$
but $X_{t+\overline{g}}$ is a $\Pv-$semimartingal. Then, we can apply the usual Tanaka formula,\\
and we obtain,\\
$|X_{t}-a|=|X_{\gamma_{t}}-a|+\rho(\int^{.}_{0}sgn(X_{s+\overline{g}}-a)dX_{s+\overline{g}}+L_{t}^{a}(X_{.+\overline{g}}))_{t}$, and\\
$|X_{t}-a|=|X_{\gamma_{t}}-a|+\rho(\int^{.}_{0}sgn(X_{s+\overline{g}}-a)dX_{s+\overline{g}}+L_{.}^{a}(X_{.+\overline{g}}))_{t}$.\\
Therefore, we have:\\
$|X_{t}-a|=|X_{\gamma_{t}}-a|+_{\Qv}\int^{t}_{0}sgn(X_{s}-a)dX_{s}+_{\Qv}L_{t}^{a}(X)$\\
We proceed in the same way to demonstrate the points $\eqref{eq2}$ and $\eqref{eq3}$.
\end{proof}

Let us recall the class $\mathcal{R}_{+}$ introduced by J.Azema and M.Yor \cite{1}.
\begin{defn}
Let $(Y_{t})$ be a nonnegative process of the form $Y_{t}=N_{t}+A_{t}$. We say that $(Y_{t})$ is of class $\mathcal{R}_{+}$ if:
\begin{enumerate}
	\item the set $\{t: Y_{t}=0\}$ is closed;
	\item $N$ is uniformly integrable martingale which is right continuous;
	\item $A$ is a continuous process which is non-decreasing and integrable such that $(dA_{t})$ is carried by $\{t: Y_{t}=0\}$;
	\item $\Pv(Y_{\infty}=0)=0$.
\end{enumerate}
\end{defn}

Now, we shall give some examples of processes in the class $\big(\sum_{s}(H)\big)$.
\begin{exple}
\begin{enumerate}
	\item Let $Y=N+A$ be an adapted process of class $\mathcal{R}_{+}$ with respect to the filtration $(\mathcal{F}_{t}^{g})$ such that $H\subset \{t: Y_{t}=0\}$ and $N_{\overline{g}}=0$. Then $Y$ is also of class $\big(\sum_{s}(H)\big)$ with respect to the filtration $(\mathcal{F}_{t})$.
	
 Indeed $N_{t}=Y_{t}-A_{t}$ is a uniformly integrable martingale. Since $\overline{g}$ is a stopping time in $(\mathcal{F}_{t}^{g})$, hence $N_{t+\overline{g}}=Y_{t+\overline{g}}-A_{t+\overline{g}}$ is a uniformly integrable martingale with respect to $(\mathcal{F}_{t+g})$. Then by applying the quotient theorem, we get that $\rho(N_{\overline{g}+.})_{t}$ is a uniformly integrable $\Qv-$martingale with respect to the filtration $(\mathcal{F}_{t})$. Therefore $\rho(Y_{\overline{g}+.})_{t}=\rho(N_{\overline{g}+.})_{t}+\rho(A_{\overline{g}+.})_{t}$ is of class $\big(\sum_{s}(H)\big)$. But $Y_{t}=\rho(Y_{\overline{g}+.})_{t}$ since $H\subset \{t: Y_{t}=0\}$. This completes the demonstration.
	\item Let $M$ be a continuous, uniformly integrable $\Qv-$martingale. Then,
	$$|M_{t}|={_{\Qv}\int_{0}^{t}sgn(M_{s})dM_{s}+_{\Qv}L_{t}^{0}(M)}$$
	is of class $\big(\sum_{s}(H)\big)$.
	\item Similarly, for any $\alpha>0$ and $\beta>0$, the process
	$$\alpha M_{t}^{+}+\beta M_{t}^{-}$$
	is of class $\big(\sum_{s}(H)\big)$.
	\item Let $M$ be a right continuous, uniformly integrable $\Qv-$martingale vanishing on $H$ such that $(M_{\overline{g}+t})$ is with only negative jumps. Let $S$ be the supremum process of $(M_{\overline{g}+t})$. Then
	$$X_{t}=\rho(S)_{t}-M_{t}$$
	is of class $\big(\sum_{s}(H)\big)$.
\end{enumerate}
\end{exple}

Now, we shall give some estimates and distributions for the pair $(X_{t},A_{t})$.
\begin{thm}
Let $X$ be a process of the class $\big(\sum_{s}(H)\big)$, with only negative jumps, such that $A_{\infty}=\infty$. Define $(\tau_{u})$ the right continuous inverse of $(A_{t+\overline{g}})$. 
$$\tau_{u}\equiv inf\{t\geq0; A_{t+\overline{g}}>u\}.$$
Let $\varphi:\R_{+}\rightarrow\R_{+}$ be a Borel function. Then, we have the following estimates:
\begin{equation}
\Pv(\exists t\geq\overline{g}, X_{t}>\varphi(A_{t}))=1-\exp\left(-\int_{0}^{+\infty}{\frac{dz}{\varphi(z)}}\right)
\end{equation}
and
\begin{equation}
\Pv(\exists t \in [\overline{g},\tau_{u}+\overline{g}], X_{t}>\varphi(A_{t}))=1-\exp\left(-\int_{0}^{u}{\frac{dz}{\varphi(z)}}\right).   
\end{equation}
\end{thm}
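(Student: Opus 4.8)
The strategy is to push the whole problem, through the time-shift $t\mapsto t-\overline{g}$, back to the classical class $\big(\sum\big)$ under $\Pv$, where the corresponding estimate is already available from A.~Nikeghbali \cite{nik}. The bridge is Proposition \ref{p1}: since $X$ is of class $\big(\sum_{s}(H)\big)$, the shifted process $\widetilde{X}=X_{.+\overline{g}}=\widetilde{N}+\widetilde{A}$ is of class $\big(\sum\big)$ for $\Pv$. I would first record that in the present section $\Pv=|\Qv|$ forces $|D_{\infty}|=1$, so the auxiliary measure $\Pv^{'}$ of Section 1 coincides with $\Pv$; this is exactly why the conclusion may be stated as a plain $\Pv$-probability rather than a $\Pv^{'}$-conditional quantity.

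Next I would verify that $\widetilde{X}$ meets the hypotheses of Theorem 3.2 of \cite{nik}. Being of class $\big(\sum\big)$ it satisfies $\widetilde{X}_{0}=0$ and $\widetilde{A}_{0}=0$ by definition; moreover $\widetilde{A}_{\infty}=A_{\infty}=\infty$, and $\widetilde{X}$ inherits from $X$ the property of having only negative jumps. I would then rewrite the two events through the substitution $t=s+\overline{g}$: since $\widetilde{X}_{s}=X_{s+\overline{g}}$ and $\widetilde{A}_{s}=A_{s+\overline{g}}$, one has
$$\{\exists\,t\geq\overline{g},\ X_{t}>\varphi(A_{t})\}=\{\exists\,s\geq0,\ \widetilde{X}_{s}>\varphi(\widetilde{A}_{s})\},$$
and, recalling that $\tau_{u}=\inf\{t\geq0:\widetilde{A}_{t}>u\}$ is the inverse of $\widetilde{A}$,
$$\{\exists\,t\in[\overline{g},\tau_{u}+\overline{g}],\ X_{t}>\varphi(A_{t})\}=\{\exists\,s\in[0,\tau_{u}],\ \widetilde{X}_{s}>\varphi(\widetilde{A}_{s})\}.$$
After the standard reduction (as in \cite{nik}) to the case where $1/\varphi$ is bounded and integrable, applying Theorem 3.2 of \cite{nik} to the class $\big(\sum\big)$ process $\widetilde{X}$ delivers the two values $1-\exp\big(-\int_{0}^{+\infty}dz/\varphi(z)\big)$ and $1-\exp\big(-\int_{0}^{u}dz/\varphi(z)\big)$, which is the assertion.

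The only point needing real care is the exactness of the event-translation under the shift, together with the observation that $\widetilde{X}_{0}=0$: it is precisely this vanishing that turns the general conditional estimate $M_{\overline{g}}\wedge1$ of the previous section into the deterministic constant $F(0)=1-\exp\big(-\int_{0}^{+\infty}dz/\varphi(z)\big)$ here. Once Proposition \ref{p1} and the identification $\Pv^{'}=\Pv$ are in place, the remainder is a direct transcription of the classical statement.
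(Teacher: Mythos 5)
Your proposal is correct and follows essentially the same route as the paper: invoke Proposition \ref{p1} to see that $\widetilde{X}=X_{.+\overline{g}}$ is of class $\big(\sum\big)$, check that it inherits the negative-jumps property and $\widetilde{A}_{\infty}=\infty$, apply Theorem 3.2 of A.~Nikeghbali, and translate the events back through the shift $t\mapsto t+\overline{g}$. Your additional remarks (that $\Pv=|\Qv|$ forces $\Pv^{'}=\Pv$, and that $\widetilde{X}_{0}=0$ is what collapses the conditional estimate to a deterministic constant) are correct and in fact make the argument more complete than the paper's own terse version.
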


\begin{proof}
From the proposition \ref{p1}, $\widetilde{X}$ is of class $\big(\sum\big)$. Since by assumptions, $\widetilde{X}$ is also a process with negative jumps such that $\widetilde{A}_{+\infty}=+\infty$. Then applying the theorem (3.2) of A.Nikeghbali \cite{nik}, we get
$$\Pv(\exists t\geq0, X_{t+\overline{g}}>\varphi(A_{t+\overline{g}}))=1-\exp\left(-\int_{0}^{+\infty}{\frac{dz}{\varphi(z)}}\right)$$
and
$$\Pv(\exists t\leq\tau_{u}, X_{t+\overline{g}}>\varphi(A_{t+\overline{g}}))=1-\exp\left(-\int_{0}^{u}{\frac{dz}{\varphi(z)}}\right).$$
Therefore
$$\Pv(\exists t\geq\overline{g}, X_{t}>\varphi(A_{t}))=1-\exp\left(-\int_{0}^{+\infty}{\frac{dz}{\varphi(z)}}\right)$$
and
$$\Pv(\exists t \in [\overline{g},\tau_{u}+\overline{g}], X_{t}>\varphi(A_{t}))=1-\exp\left(-\int_{0}^{u}{\frac{dz}{\varphi(z)}}\right).$$ 
\end{proof}


\end{document}